\documentclass[11pt]{amsart}

\usepackage{amsmath, amsthm, amssymb, amsfonts, enumerate}
\usepackage[colorlinks=true,linkcolor=blue,urlcolor=blue]{hyperref}
\usepackage{color}
\usepackage{geometry}
\usepackage{mathtools}
\usepackage{graphicx}
\usepackage{bm}
\usepackage{verbatim}
\geometry{hmargin=3cm, vmargin=2.5cm}
\usepackage{enumitem}
\newtheorem{theorem}{Theorem}[section]
\newtheorem{remark}[theorem]{Remark}
\newtheorem{assumption}[theorem]{Assumption}

\newtheorem{proposition}[theorem]{Proposition}

\def \cC{{\mathcal C}}
\def \cD{{\mathcal D}}
\def \cF{{\mathcal F}}

\def \cL{{\mathcal L}}

\def \E{\mathsf{E}}
\def \P{\mathsf{P}}
\def \R{\mathbb{R}}

\def \N{\mathbb{N}}

\def \eps{\varepsilon}

\def \d{{\mathrm d}}
\def \e{{\mathrm e}}

\def \mathds {{\bf}}
\newcommand{\rom}[1]{\uppercase\expandafter{\romannumeral #1\relax}}

\title[A change of variable formula for optimal stopping]{A change of variable formula with applications to multi-dimensional optimal stopping problems}
\author[C.~Cai]{Cheng Cai}
\author[T.~De Angelis]{Tiziano De Angelis}
\keywords{It\^o's formula, change of variable formula, multidimensional optimal stopping}
\thanks{2020 {\em Mathematics Subject Classification}: 60H05, 60G44, 60J60, 60J65, 60G40, 35R35}
\address{C.~Cai: School of Mathematics, University of Leeds, Woodhouse Lane, LS2 9JT Leeds, UK.}
\email{\href{mailto:mmcca@leeds.ac.uk}{mmcca@leeds.ac.uk}}
\address{T.~De Angelis: School of Management and Economics, Dept.\ ESOMAS, University of Turin, C.so Unione Sovietica 218bis, 10134, Turin, ITALY; Collegio Carlo Alberto, Piazza Arbarello 8, 10122, Turin, ITALY.}
\email{\href{mailto:tiziano.deangelis@unito.it}{tiziano.deangelis@unito.it}}
\date{\today}

\numberwithin{equation}{section}

\begin{document}

\begin{abstract}
We derive a change of variable formula for $C^1$ functions $U:\R_+\times\R^m\to\R$ whose second order spatial derivatives may explode and not be integrable in the neighbourhood of a surface $b:\R_+\times\R^{m-1}\to \R$ that splits the state space into two sets $\cC$ and $\cD$. The formula is tailored for applications in problems of optimal stopping where it is generally very hard to control the second order derivatives of the value function near the optimal stopping boundary. Differently to other existing papers on similar topics we only require that the surface $b$ be monotonic in each variable and we formally obtain the same expression as the classical It\^o's formula. 
\end{abstract}

\maketitle

\section{Introduction}
The main aim of this paper is to provide a change of variable formula for a process $U(t,X_t)$ where $U:\R_+\times\R^m\to\R$ is a function and $X$ a stochastic process. Our setting is tailored for optimal stopping problems but the result is also of independent interest since it complements existing generalisations of It\^o's formula. We could think of $U$ as the value function of an optimal stopping problem whose underlying stochastic process is a suitable multi-dimensional c\`adl\`ag semi-martingale $X$. With this in mind we divide the state space $\R_+\times\R^m$ into two subsets $\cC$ and $\cD$, whose boundary $\partial\cC$ would correspond to the optimal stopping boundary. Our focus is on obtaining a formula that resembles the classical It\^o's formula and does not involve either local times or the quadratic covariation between the underlying process $X$ and the spatial gradient $\nabla U(t,X)$. This is important, for example, when deriving the dynamics of hedging portfolios for American options on multiple assets or integral equations for optimal stopping boundaries (in the spirit of numerous examples in the book by Peskir and Shiryaev \cite{PSbook}). Since we want to avoid using local times and quadratic covariation, we do require that the spatial gradient $\nabla U$ be a continuous function. However, we require minimal regularity on the second order spatial derivatives of $U$ near the boundary $\partial\cC$ and very mild monotonicity properties of the boundary itself. Our assumptions will be shown to hold naturally in a very broad class of optimal stopping problems for which existing generalisations of It\^o's formula are either technically more involved than ours or not applicable (see Section \ref{sec:appl}). A key difficulty for the application of existing formulae in the context of optimal stopping for {\em multi-dimensional} semi-martingales ($m\ge 2$) is that neither the value function nor the optimal stopping boundary of the problem are known explicitly. Therefore, two non-trivial technical problems arise in practice: 
\begin{itemize}
\item[(i)] it is often very difficult to prove sufficient regularity of the second-order spatial derivatives of the value function up to the optimal boundary; 
\item[(ii)] in some cases it is required that by composing the optimal stopping boundary with the underlying stochastic process (in a sense clarified below) one should obtain a semi-martingale. Due to limited knowledge on the smoothness of the boundary, such a semi-martingale assumption is often difficult to verify in practice. \end{itemize}
As we will clarify in the rest of this Introduction and in Section \ref{sec:appl}, our paper circumvents those hurdles and provides a viable tool for applied problems.

We now review some of the main results in the field but without the ambition to give a full account of the existing literature, which is vast and branches out in several specialised directions. In order to avoid confusion with our own setting, below we use $F$ to denote the function to which the change of variable formula is applied in the literature that we discuss.

Various change of variable formulae have been developed that do not even require continuity of first order spatial derivatives of $F$. Perhaps the best known one is the so-called It\^o-Tanaka-Meyer formula (see, e.g., \cite[Thm.IV.7.70]{protterbook}) which applies to functions $F:\R\to\R$ that are a difference of convex functions (see also \cite[Sec.\ 3]{azema1998} for an extension to $F(t,X_t)$ with $X$ a one-dimensional Brownian motion). Relaxing the assumption of convexity is generally difficult but a number of results are known in the literature. An early work in this direction is the one by Bouleau and Yor \cite{bouleau1981} who establish a formula for functions $F:\R\to\R$ which are absolutely continuous with locally bounded first order derivative and for a fairly broad class of c\`adl\`ag semi-martingales. The key idea in that work is that the semi-martingale local time defines a measure on $\R$ via the mapping $a\mapsto L^a_t$ (see, e.g., \cite[Thm.\ IV.7.77]{protterbook} and the subsequent corollary for details). F{\"o}llmer and Protter \cite{follmer2000ito} generalise those results to functions $F:\R^d\to\R$ whose first order partial derivatives exist in the weak sense as functions in $L^2$ and the underlying process is a $d$-dimensional Brownian motion. Analogous results in the one-dimensional case had been previously obtained by F{\"o}llmer, Protter and Shiryaev in \cite{follmer1995quadratic} (see also Bardina and Jolis \cite{bardina1997extension} for time-space extensions in the case of one-dimensional diffusions with suitable transition density). Those works shift the focus from the use of semi-martingale local times (as in Bouleau and Yor \cite{bouleau1981}) to the use of quadratic covariation of $\nabla F(X)$ and $X$. Quadratic covariation appears also in work by Russo and Vallois \cite{russo1996ito}, who require continuous differentiability of the function $F:\R^d\to \R$ but develop change of variable formulae for more general processes than just semi-martingales, thanks to notions of forward and backward integrals that they introduced in earlier papers (see also subsequent results by Errami, Russo and Vallois \cite{errami2002ito}). Further results based on quadratic covariation of $\nabla F(X)$ and $X$ are established by Moret and Nualart \cite{moret2001generalization} when $F$ belongs to the Sobolev class $W_{\ell oc}^{1,p}(\R^d)$ and $X$ is a non-degenerate martingale, using Malliavin calculus techniques. In the case of diffusions associated to uniformly elliptic operators in divergence form Rozkosz \cite{rozkosz1996stochastic} establishes a change of variable formula for functions $F$ in the class $W_{\ell oc}^{1,p}(\R^d)$, for $p> 2\wedge d$, via Stratonovich integrals. 

The focus on properties of local times of semi-martingales is central in works by Peskir \cite{peskir2005change} and \cite{peskir2007change}, which are close in spirit to our paper (see also \cite{ghomrasni2004local} for further results and links to other generalisations of It\^o's formula). In particular, in \cite{peskir2005change} Peskir studies a change of variable formula for processes $F(t,X_t)$ where $X$ is a continuous semi-martingale, $F:\R_+\times\R\to \R$ is such that $F\in C^{1,2}$ separately in the closure of two sets $\cC$ and $\cD$, with $\R_+\times \R=\cC\cup\cD$ and the sets are separated by the graph of a continuous function $b:\R_+\to \R$ of bounded variation. Spatial derivatives of $F$ need not be continuous across the boundary of the two sets $\partial\cC=\partial\cD$, which leads Peskir to consider the local time of $X$ along the curve $t\mapsto b(t)$. The $C^{1,2}$ requirement on $F$ can be weakened to hold only in the interior of the sets $\cC$ and $\cD$, separately, if $X$ is a continuous diffusion (see \cite[Sec.\ 3]{peskir2005change}).
In his other paper \cite{peskir2007change}, Peskir extends the result to multi-dimensional, possibly discontinuous semi-martingales $X\in\R^d$ and in this case the sets $\cC$ and $\cD$ are separated by the graph of a function $b:\R_+\times \R^{d-1}\to\R$ that is continuous and such that the process $b^X:= b(t,X^1,\ldots X^{d-1})$ is a semi-martingale. These assumptions on $b^X$ may be hard to verify directly in applications to optimal stopping, because the boundary $b$ is not given explicitly, and it was one of the main motivations for our own paper. Elworthy, Truman and Zhao \cite{elworthy2007generalized} also obtain change of variable formulae for time-space processes where the spatial component is a one-dimensional semi-martingale (for an extension to two-dimensional diffusions see \cite{feng2007generalized}); they require left-derivatives in time and space of the function $F$ to have bounded variation. 

Eisenbaum \cite{eisenbaum2006local} developes change of variable formulae for multi-dimensional L\'evy processes when first order partial derivatives of the function $F$ exist and are integrable, without further assumptions on second order derivatives. She relies on a suitable notion of integrals with respect to local time $(a,t)\mapsto L^a_t$, understood as integrator in both variables, and connects her results to all the papers we mentioned so far (see also \cite{eisenbaum2000integration} and \cite{eisenbaum2001ito} for earlier closely related work by the same author). More recently, Wilson \cite{W18} also studied integrals with respect to local time as a map $(a,t)\mapsto L^a_t$ (building upon ideas from \cite{eisenbaum2006local} and \cite{ghomrasni2004local}). 
He then uses such integrals in \cite{W19} to derive a change of variable formula for functions $F:\R_+\times\R^2\to \R$ when the underlying process is a two-dimensional jump diffusion process whose jumps are of bounded variation and with no diffusive part in the second component. Wilson's assumptions on $F$ are in the same spirit as those by Eisenbaum but his change of variable formula draws on \cite{peskir2005change} and \cite{peskir2007change}. However, \cite{W19} requires that either the boundary $b:\R_+\times \R\to\R$ be Lipschitz continuous or $b^X:=b(t,X^2)$ be of bounded variation. Both assumptions are generally difficult to check in applications to optimal stopping. Finally, under the assumption that smooth-fit holds and with an analogue of our Assumption \ref{ass:1-2} in place, \cite{W19} obtains a generalisation of It\^o's formula without requiring $b^X$ of bounded variation (but still requiring $X^2$ of bounded variation). 

It is worth mentioning that a number of interesting results on generalisations of It\^o's formula developed in the early 2000s are collected in the book \cite{LNP}. There we find for example work by Kyprianou and Surya \cite{Kyprianou} on a change of variable formula with local times on curves, for one-dimensional L\'evy processes of bounded variation. Some of the work by Eisenbaum, Peskir, Russo and Vallois are also contained therein.

In the theory of stochastic control the most widely used extensions of It\^o's formula for time-space diffusion processes (generally admitting smooth transition density), require $F\in W^{1,2,p}_{\ell oc}(\R_+\times\R^m)$ for $p>1$ sufficiently large to also guarantee that the spatial gradient $\nabla F$ is continuous thanks to Sobolev embedding (see, e.g., \cite[Ch.\ 2.8]{bensoussan}, \cite[Ch.\ 2 Sec.\ 10]{krylov2008controlled} or \cite[Ch.\ 8]{fleming}). While our proof is inspired by those results, we remark that our function $U$ does not belong to the Sobolev class $W^{1,2,p}_{\ell oc}(\R_+\times\R^m)$ because we do not require integrability of second order spatial derivatives   in neighbourhoods of the boundary $\partial\cC$. 
In the context of applications to optimal stopping it is also worth mentioning the work by Alsmeyer and Jaeger \cite{alsmeyer2005useful}. They prove a change of variable formula for functions $F:\R^{d+1}\to\R$ that are continuously differentiable and whose derivative in its first variable (denoted $D_{x_0}F$) is absolutely continuous as a map $z\mapsto D_{x_{0}}F(z,x_1,\ldots x_d)$ for all $(x_1,\ldots x_d)$ fixed. Differently from our set-up their result applies for processes $X=(M,V^1,\ldots V^d)$ where $M$ is a continuous semimartingale and $(V^1,\ldots V^d)$ is a continuous process of locally bounded variation.  

The paper is organised as follows. In Section \ref{sec:results} we present our framework and state our change of variable formula. In Section \ref{sec:appl} we discuss the applicability of our result in optimal stopping problems for multidimensional processes. In Section \ref{sec:proof} we prove our change of variable formula.

\section{Setting and main result}\label{sec:results}
On a filtered probability space $(\Omega,\cF,(\cF_t)_{t\ge 0},\P)$ we consider a $m$-dimensional Brownian motion $\bm{B}:=(B^1_t,\ldots B^m_t)_{t\ge 0}$ and denote by $\bm{X}:=(X^{1},...,X^{m})$ a solution in $\R^m$ of the stochastic differential equation (SDE): for $i=1,\ldots m$,
\begin{equation}
\label{eq:X-sde}
\d X^{i}_t=\alpha^{i}(t,\bm{X}_{t-})\d t+\sum_{j=1}^{m}\sigma^{ij}(t,\bm{X}_{t-})\d B^{j}_t+\gamma^i(t,\bm X_{t-})\d A^i_t, \quad X^{i}_0=x_i,
\end{equation}
where $\bm{A}=(A^{1},...,A^{m})$ is a c\`adl\`ag process of bounded variation with $\bm{A}_0=\bm 0$. 
Here we use boldface letters to indicate vectors 
and denote 
\[
\beta^{ij}(t,\bm{x}):=\sum_{k=1}^{m}\sigma^{ik}(t,\bm{x})\sigma^{jk}(t,\bm{x})
\]
and $f_{x_i}=\frac{\partial f}{\partial x_i}$, $f_{x_i x_j}=\frac{\partial^2 f}{\partial x_i\partial x_j}$ for all $i,j=1,\ldots m$. The coefficients of the SDE are assumed to be measurable and, for the sake of concreteness, we also assume for all $t\ge 0$ that
\[
\int_0^t\sum_{i=1}^m\big|\gamma^i(s,\bm X_{s-})\big|\d |A^i|_s+\int_0^t\Big( \sum_{i=1}^m\big|\alpha^{i}(s,\bm{X}_s)\big| + \sum_{i,j=1}^{m}\big|\sigma^{ij}(s,\bm{X}_s)\big|^2\Big)\d s<\infty,\qquad \P\text{-a.s.},
\]
where we denote by $|A^i|_s$ the total variation process associated to $A^i$.

We divide the state-space into two subsets, i.e., $\R_+\times\R^m=\cC\cup\cD$, with $\cC$ open and $\cD$ closed. We further assume that such subsets can be described in terms of a surface $b_1:\R_+\times \R^{m-1}\to \R$ as
\begin{align}
&\cC=\{(t,\bm{x})\in\R_+\times\R^m\, :\, x_1>b_1(t,x_2,...x_m)\},\label{eq:setC}\\
&\cD=\{(t,\bm{x})\in\R_+\times\R^m\, :\, x_1\le b_1(t,x_2,...x_m)\}\label{eq:setD}.
\end{align}

The main aim of the paper is to prove a change of variable formula for functions $U:\R_+\times\R^m\to \R$ whose second order spatial derivatives may explode along the boundary $\partial\cC$ arbitrarily fast. 
\begin{theorem}\label{thm:main}
Assume the following:
\begin{enumerate}[label=\textbf{A.\arabic*}]
\item
\label{ass:1-1}
The coefficients $\beta^{ij}$ are locally Lipschitz and $\P((t,\bm{X}_{t-})\in\partial \cC)=0$ for a.e.\ $t\ge 0$; 
\item
\label{ass:1-2}
A function $U:\R_+\times\R^m\to \R$ is such that $U\in C^1(\R_+\times\R^m)$ with $U\in C^{1,2}(\cC)\cap C^{1,2}(\cD)$. Moreover, for any compact subset $K\subset \R_+\times\R^m$ the function 
\begin{align}\label{eq:L}
L(t,\bm x):=\sum_{i,j=1}^m \beta^{ij}(t,\bm{x})U_{x_i x_j}(t,\bm{x})
\end{align} 
is bounded for $(t,\bm{x})\in K\setminus \partial\cC$. That is, for any compact $K$ there exists $c_K$ such that 
\begin{align}\label{eq:BL}
\sup_{(t,\bm x)\in K\setminus\partial\cC}|L(t,\bm x)|\le c_K;
\end{align} 
\item
\label{ass:1-3}
The mappings $x_i\mapsto b_1(t,x_2,\ldots,x_m)$, $i=2,\ldots m$, and $t\mapsto b_1(t,x_2,\ldots,x_m)$ are monotonic. 
\end{enumerate}

Then, we have the change of variable formula: 

\begin{equation}
\label{eq:Ito-0}
\begin{aligned}
&U(t,\bm{X}_{t})=U(0,\bm{x})\\
&\quad+\int_0^{t} \Big[\Big(U_t+\sum_{i=1}^m \alpha^i U_{x_i}\Big)(u,\bm{X}_{u-})+\tfrac{1}{2}\sum_{i,j=1}^m\mathds{1}_{\{(u,\bm X_{u-})\notin\partial\cC\}}\big(\beta^{ij}U_{x_i x_j}\big)(u,\bm{X}_{u-})\Big]\d u \\
&\quad+\sum_{i=1}^m\int_0^{t}\big(\gamma^i U_{x_i}\big)(u,\bm{X}_{u-})\d A^{c,i}_u+\sum_{u\le t} \Big(U(u,\bm X_u)-U(u,\bm X_{u-})\Big)\\
&\quad+\sum_{i,j=1}^{m}\int_{0}^{t}U_{x_i}(u,\bm{X}_{u-})\sigma^{ij}(u,\bm{X}_{u-}) \d B^{j}_u,\qquad\text{for $t\in [0,\infty)$, $\P$-a.s.,}
\end{aligned}
\end{equation}
where we used the decomposition $A^{i}_t=A^{c,i}_t+\sum_{s\le t}\Delta A^i_s$ with $A^{c,i}$ the continuous part of the process $A^i$.
\end{theorem}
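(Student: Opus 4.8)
The plan is to mollify $U$ into a sequence of $C^{1,2}$ functions, apply the classical It\^o formula, and pass to the limit, with the crucial difficulty being that the second-order derivatives of $U$ are not integrable near $\partial\cC$, so we must exploit Assumption~\ref{ass:1-2} together with the monotonicity of $b_1$ to show that the time the process $\bm X$ spends in a shrinking neighbourhood of $\partial\cC$ is negligible. First I would localise: by stopping $\bm X$ at the exit time from a large ball and using a standard patching argument, it suffices to prove the formula on an arbitrary compact $K$, so all the bounds in \ref{ass:1-2} become uniform constants. Next, for $\eps>0$ define the $\eps$-neighbourhood $N_\eps:=\{(t,\bm x): |x_1-b_1(t,x_2,\dots,x_m)|<\eps\}$ of the boundary, and construct a mollification $U^\eps$ of $U$ (convolving in the spatial variables, and possibly also smoothing across $\partial\cC$ by interpolating between the $C^{1,2}$ extensions of $U|_{\overline\cC}$ and $U|_{\overline\cD}$) in such a way that: $U^\eps\to U$ and $\nabla U^\eps\to\nabla U$ locally uniformly (possible since $U\in C^1$), $U^\eps=U$ outside $N_\eps$ for the derivatives of order $\le 2$, and $|\beta^{ij}U^\eps_{x_ix_j}|\le c_K$ uniformly on $K$ (inheriting the bound \eqref{eq:BL} since a convex combination of bounded quantities stays bounded).

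Then I would apply the classical It\^o formula to $U^\eps(t,\bm X_t)$, which is legitimate since $U^\eps\in C^{1,2}$. Every term except the second-order term on $K$ converges to the corresponding term in \eqref{eq:Ito-0}: the $U^\eps_t+\sum\alpha^iU^\eps_{x_i}$ term and the $\gamma^iU^\eps_{x_i}$ term converge by local uniform convergence of $U^\eps,\nabla U^\eps$ and dominated convergence against $\d u$ and $\d|A^{c,i}|_u$; the stochastic integral converges in probability by dominated convergence for It\^o integrals; and the jump sum converges because $U^\eps\to U$ pointwise and the jumps are summable on $[0,t]$ (here one uses that $\Delta\bm X$ comes only from the finite-variation part). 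It remains to handle
\[
\tfrac12\sum_{i,j}\int_0^t \big(\beta^{ij}U^\eps_{x_ix_j}\big)(u,\bm X_{u-})\,\d u,
\]
which I would split as an integral over $\{(u,\bm X_{u-})\notin N_\eps\}$ plus an integral over $\{(u,\bm X_{u-})\in N_\eps\}$. On the complement of $N_\eps$ the integrand equals $\beta^{ij}U_{x_ix_j}$ and, by \ref{ass:1-1}, converges $\d u$-a.e.\ to $\mathds{1}_{\{(u,\bm X_{u-})\notin\partial\cC\}}\beta^{ij}U_{x_ix_j}$, so with the uniform bound $c_K$ and bounded convergence this piece converges to the second-order term in \eqref{eq:Ito-0}. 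The integral over $N_\eps$ is bounded in absolute value by $c_K\int_0^t\mathds{1}_{\{(u,\bm X_{u-})\in N_\eps\}}\,\d u$, and the whole argument reduces to showing
\[
\E\!\int_0^t \mathds{1}_{\{(u,\bm X_{u-})\in N_\eps\}}\,\d u \xrightarrow[\eps\to0]{}0 .
\]

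This occupancy-time estimate is the main obstacle, and it is precisely where Assumption~\ref{ass:1-3} enters. By Fubini it equals $\int_0^t\P\big((u,\bm X_{u-})\in N_\eps\big)\,\d u$, and by \ref{ass:1-1} the integrand tends to $\P\big((u,\bm X_{u-})\in\partial\cC\big)=0$ for a.e.\ $u$, so it suffices to get a uniform bound $\P\big((u,\bm X_{u-})\in N_\eps\big)\le C$ and invoke dominated convergence in $u$; the real content is therefore to show the set $N_\eps$ has small occupation, which I would reduce to a one-dimensional statement about $X^1$. Conditioning on $\cF_{u-}$-measurable data and using that $X^1_u$ has, from the Brownian component $\sum_j\sigma^{1j}\d B^j$, a conditional law with a bounded density on compacts (nondegeneracy coming from $\beta^{11}>0$, which one reads off from $L$ being controlled — or, failing uniform ellipticity, a more delicate argument is needed), the event $\{|X^1_{u-}-b_1(u,X^2_{u-},\dots,X^m_{u-})|<\eps\}$ has probability $O(\eps)$ uniformly. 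The monotonicity in \ref{ass:1-3} is what guarantees that the level set $\{x_1=b_1(t,x_2,\dots,x_m)\}$ is, after a coordinate-wise reflection, the graph of a monotone (hence Borel, and ``thin'') function, so that slicing in the $x_1$-direction is well defined and $N_\eps$ genuinely shrinks to a null set without pathological oscillation; this replaces the bounded-variation or semi-martingale hypotheses on $b^X$ used in \cite{peskir2007change,W19}. Assembling the pieces, every term of It\^o's formula for $U^\eps$ converges to the claimed term, and since the left-hand side $U^\eps(t,\bm X_t)\to U(t,\bm X_t)$, the identity \eqref{eq:Ito-0} follows on $K$, hence after removing the localisation, for all $t\ge0$, $\P$-a.s.\ (upgrading convergence in probability to an a.s.\ identity via continuity in $t$ of both sides and a subsequence argument).
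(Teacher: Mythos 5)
Your overall strategy (smooth $U$, apply the classical formula, pass to the limit using the bound on $L$ and the fact that $\bm X$ does not sit on $\partial\cC$) is the same as the paper's, but the step you dispose of in a parenthesis is precisely the heart of the matter, and as written it does not hold up. You posit a mollification $U^\eps$ with $|\beta^{ij}U^\eps_{x_ix_j}|\le c_K$ uniformly on $K$, justified by ``interpolating between the $C^{1,2}$ extensions of $U|_{\overline\cC}$ and $U|_{\overline\cD}$'' and by ``a convex combination of bounded quantities stays bounded''. Under Assumption \ref{ass:1-2} there is no $C^{1,2}$ extension of $U|_{\cC}$ up to $\partial\cC$: the second derivatives may blow up at the boundary from inside $\cC$ and need not even be locally integrable, which is the whole point of the theorem. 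For the same reason a convolution does not ``inherit'' \eqref{eq:BL}: near $\partial\cC$ the smoothed second derivatives are not averages of $U_{x_ix_j}$ (which may not exist or be integrable across the boundary) but difference quotients of $U_{x_j}$ as in \eqref{eq:un_ij}, and the bounded quantity is the combination $\sum_{i,j}\beta^{ij}(t,\bm x)U_{x_ix_j}(t,\bm x)$ with the coefficients frozen at the evaluation point, so a commutator term $\beta^{ij}(t,\bm x)-\beta^{ij}(t,\bm z)$ multiplying a possibly non-integrable $U_{x_ix_j}(t,\bm z)$ appears and cannot be dismissed. Establishing the uniform bound \eqref{eq:goal} is exactly what the paper's Step 1--2 do, and it is there (not in the occupation-time estimate) that Assumption \ref{ass:1-3} is really used: monotonicity lets one define the shifted boundary $b_1^\eps$ in \eqref{eq:b-1-eps} with $\overline{\cC^\eps_1}\subset\cC$, and the generalised inverses $b_i$ so the same slicing works in every coordinate direction; the cube over which one averages is split into a part in $\cC^\eps_1$, a part in $\cD$ (where \eqref{eq:BL} and $C^{1,2}(\cD)$ apply), and remainders $F^{n,\eps}_{ij}$ in \eqref{eq:P3} involving only $\nabla U$ at points $O(\eps)$ apart, which vanish as $\eps\downarrow 0$ by $U\in C^1$; the commutator is handled by an integration by parts that moves the derivative onto the Lipschitz coefficient $\beta^{ij}$. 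Note also that your per-term bound $|\beta^{ij}U^\eps_{x_ix_j}|\le c_K$ is stronger than anything available even after this work: only the full sum is controlled.

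A secondary point: your occupation-time step is both harder than necessary and, as formulated, relies on hypotheses not in the theorem. You invoke a bounded conditional density for $X^1$ and nondegeneracy of $\beta^{11}$ (``read off from $L$ being controlled'', which it is not, and the theorem deliberately allows degenerate coordinates). None of this is needed: $N_\eps$ decreases as $\eps\downarrow 0$ to $\{x_1=b_1(t,\bm x_{-1})\}\subset\partial\cC$, so $\P((u,\bm X_{u-})\in N_\eps)\to 0$ for a.e.\ $u$ by continuity from above and Assumption \ref{ass:1-1}, and bounded convergence in $u$ finishes it; in fact, once the uniform bound \eqref{eq:goal} is in hand, one does not need the expected occupation time at all -- pointwise a.e.\ convergence of the second-order integrand off $\partial\cC$ plus bounded convergence along the path suffices, as in the paper. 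So the proposal's architecture is sound and close to the paper's, but the uniform control of $\sum_{i,j}\beta^{ij}U^\eps_{x_ix_j}$ near the boundary is asserted rather than proved, and the justification offered for it is incorrect; without that, the argument does not go through.
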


Since the jumps of the process $\bm X$ only arise from the bounded variation process $\bm A$, the expression for the jump terms in \eqref{eq:Ito-0} is equivalent to the usual one found in textbooks:
\begin{align*}
&\sum_{i=1}^m\int_0^{t}\big(\gamma^i U_{x_i}\big)(u,\bm{X}_{u-})\d A^{c,i}_u+\sum_{u\le t} \Big(U(u,\bm X_u)-U(u,\bm X_{u-})\Big)\\
&=\sum_{i=1}^m\int_0^{t} \big(\gamma^i U_{x_i}\big)(u,\bm{X}_{u-})\d A^{i}_u+\sum_{u\le t}\Big(U(u,\bm X_u)-U(u,\bm X_{u-})-\sum_{i=1}^m \big(\gamma^i U_{x_i}\big)(u,\bm X_{u-})\Delta A^{i}_u\Big).
\end{align*}  

Assumption \ref{ass:1-2} says that the derivatives $U_{x_i x_j}$ are continuous separately in $\cC$ and in the interior of $\cD$. Moreover, the condition $U\in C^{1,2}(\cD)$ means that, for each $i,j=1,\ldots m$, the function $U_{x_i x_j}$ admits a continuous extension from the interior of $\cD$ to its boundary $\partial \cD=\partial \cC$. On the contrary, $U_{x_i x_j}$ need not admit a continuous extension from $\cC$ to $\partial\cC$ for $i,j=1,\ldots m$. 
In general boundedness of the function $L$ in \eqref{eq:L} is not sufficient for the boundedness of all second order spatial derivatives. 

The need to have some control over the function $L$ in \eqref{eq:L} was already indicated by Peskir in \cite[Thm.\ 3.1]{peskir2005change} (see the condition in Eq.\ (3.26) therein) in the case when the boundary $b$ is a continuous function of bounded variation only depending on time and $X$ is a one-dimensional diffusion process. Peskir et al.\ \cite[Thm.\ 19]{ernst} also employ a condition similar to \eqref{eq:BL} to obtain Dynkin's formula (rather than It\^o's formula) for a two-dimensional diffusion. Their proof requires different arguments to ours as they need convexity/concavity of their function $U$ and use estimates on the expected value of local times. 

\begin{remark}[Degenerate processes]
It is intuitively clear and it can be easily seen from the proof of the theorem that if the $i$-th coordinate of the process $\bm X$ is of bounded variation (i.e., $\sigma^{ij}\equiv 0$ for all $j=1,\ldots m$) it is {\em not} necessary to require existence of the second order partial derivatives $U_{x_i x_j}$ for $j=1,\ldots m$ in Assumption \ref{ass:1-2}.
\end{remark}

\begin{remark}[Assumptions on the boundary]
Assumption \ref{ass:1-3} is much easier to verify in applications to multi-dimensional optimal stopping problems than the assumption on the boundary $\partial\cC$ made in \cite{peskir2007change} (and more recently in \cite{W19} but only for two dimensional processes). In \cite{peskir2007change}, $\bm{X}$ is a general semi-martingale and the process $b^X_t=b(t,X^2_t,\ldots X^{m}_t)$ must also be a semi-martingale (with $b$ continuous). That is not true in general if only monotonicity of the boundary is known.  Of course, we are able to allow for much less stringent conditions on the boundary because, differently to \cite{peskir2007change}, our focus {\em is not} on the role of local times on surfaces and we assume continuous differentiability of the function $U$.
\end{remark}

\begin{remark}[Reflecting diffusions]
We chose to state our theorem including the bounded variation process $\bm A$ in the dynamics \eqref{eq:X-sde} because we have in mind applications to problems for reflecting diffusions and applications in singular stochastic control. In those cases, the condition $\P((t,\bm X_{t-})\in\partial\cC)=0$ for a.e.\ $t\ge 0$ in Assumption \ref{ass:1-1} is generally satisfied by Skorokhod's construction of reflecting diffusions. 
\end{remark}

\section{Applications in optimal stopping}\label{sec:appl}
Our main motivation for the development of a change of variable formula of the kind in Theorem \ref{thm:main} is its applicability in optimal stopping problems. In particular, when the underlying process is multi-dimensional and diffusive, existing change of variable formulae are difficult to apply because they require regularity conditions on the value function of the problem and on the associated optimal stopping boundary that are hard to check in practice. Here we illustrate some advantages of our formula for this type of applications. There are three subsections. In the first one we provide a general problem formulation and we discuss our specific assumptions in this context. In the second one we illustrate in detail four examples from the literature on optimal stopping (and singular control). In those examples an application of existing change of variable formulae is not possible whereas our formula can be applied directly. In the final subsection we formulate sufficient conditions on the problem data, in a general setting, that guarantee monotonicity of the optimal boundary as in Assumption \ref{ass:1-3}.

\subsection{A general optimal stopping framework}\label{subs:framework}
Letting $G:\R_+\times\R^m\to\R$ be a measurable function and $s\mapsto\Pi^t_s(\bm X)$ an additive functional of the process $(s,\bm X_s)_{s\ge t}$, one is often interested in problems of the type
\begin{align}\label{eq:OS}
U(t,\bm x)=\sup_{t\le \tau\le T}\E_{t,\bm x}\Big[\e^{-\Pi^t_\tau (\bm X)} G(\tau,\bm X_\tau)\Big],
\end{align}
where $T\in(0,\infty]$ is a fixed horizon, $t\in[0,T]$, the supremum is taken over stopping times of the underlying filtration $(\cF_t)$ and the expectation $\E_{t,\bm x}$ is with respect to the measure $\P_{t,\bm x}(\,\cdot\,):=\P(\,\cdot\,|\bm X_t=\bm x)$. In most examples the additive functional $\Pi^t$ arises from a discount rate, i.e.,
\begin{align}\label{eq:lambda}
\Pi^t_s(\bm X)=\int_t^s r(u,\bm X_{u-})\d u,
\end{align}
for some measurable functions $r:\R_+\times\R^m\to \R$. However, there are examples in which $\Pi^t$ may take other forms as, e.g., that of a local time of the process $\bm X$ (see, e.g., \cite{de2017dividend}).

Under a set of fairly mild assumptions, it is known that an optimal stopping time for the problem above exists and it takes the form (see, e.g., \cite{PSbook})
\begin{align}\label{eq:taustar}
\tau_*=\inf\{s\in[t,T]: U(s,\bm X_s)=G(s,\bm X_s)\}.
\end{align}
From this stems the interest for the study of the so-called continuation and stopping sets, denoted by $\cC$ and $\cD$, respectively, and defined as
\[
\cC=\{(t,\bm x): U(t,\bm x)>G(t,\bm x)\}\quad\text{and}\quad\cD=\{(t,\bm x): U(t,\bm x)=G(t,\bm x)\}.
\] 
In particular, parametrisations of the continuation and stopping sets as those presented in \eqref{eq:setC} and \eqref{eq:setD} are widely studied in the literature as they often enable a detailed theoretical analysis of the problem at hand. 

Together with the probabilistic results on optimality of $\tau_*$ and the so-called super-harmonic property of the value function (see \cite{PSbook}) there is also an analytic formulation of problem \eqref{eq:OS}, in terms of a free boundary problem. For simplicity let us take $\gamma^i\equiv 0$ in \eqref{eq:X-sde} and $\Pi^t$ as in \eqref{eq:lambda}. Then the free boundary problem solved by the value function reads
\begin{equation}
\begin{aligned}
&U_t+\tfrac{1}{2}\sum_{i,j}\beta^{ij}U_{x_i x_j}+\sum_i\alpha^i U_{x_i}-rU=0,\quad\text{in $\cC$},\\
&U_t+\tfrac{1}{2}\sum_{i,j}\beta^{ij}U_{x_i x_j}+\sum_i\alpha^i U_{x_i}-rU\le 0,\quad\text{in $\cD$},
\end{aligned}
\end{equation}
with terminal condition $U(T,\bm x)=G(T,\bm x)$.  It is clear that $U=G$ on $\cD$. If for example $G\in C^{1,2}(\cD)$, then $U$ inherits such regularity in $\cD$ and we have
\[
U_t+\tfrac{1}{2}\sum_{i,j}\beta^{ij}U_{x_i x_j}+\sum_i\alpha^i U_{x_i}-rU=G_t+\tfrac{1}{2}\sum_{i,j}\beta^{ij}G_{x_i x_j}+\sum_i\alpha^i G_{x_i}-rG,\quad\text{ in $\cD$}. 
\]
So, by the free boundary formulation we see that the function $L$ from Assumption \ref{ass:1-2} reads
\begin{align}\label{eq:LLL}
L(t,\bm x)=
\left\{
\begin{array}{ll}
2(rU-\sum_i\alpha^i U_{x_i}-U_t)(t,\bm x), & (t,\bm x)\in\cC,\\[+5pt]
\sum_{i,j}\beta^{ij}(t,\bm x) G_{x_i x_j}(t,\bm x), & (t,\bm x)\in\cD.
\end{array}
\right.
\end{align}

It is possible to prove (see \cite{DeAPe}) that if $\partial\cC$ is regular in the sense of diffusions for the interior of the stopping set, then $U\in C^1([0,T)\times\R^m)$. In that context, the bound on $L$ required by Assumption \ref{ass:1-2} is satisfied as soon as $\alpha^i$ and $r$ are continuous functions and $G\in C^{1,2}(\cD)$.
The requirement $G\in C^{1,2}(\cD)$ is actually rather mild and it allows to cover cases in which $G$ is not smooth everywhere in $[0,T]\times\R^m$. Perhaps the best known example of such behaviour is the case of American put/call options, where $G(x)=\max(K-x,0)$ and $G(x)=\max(x-K,0)$, respectively, for a constant $K>0$. In those examples $G$ is not even continuously differentiable in the whole space {\em but} it is smooth in the stopping set (see \cite{CDeAP} for a finite-horizon, multi-dimensional setting with $m=2$). The above discussion shows that in optimal stopping it is potentially rather easy to prove that Assumption \ref{ass:1-2} holds. Instead, it could be extremely difficult to obtain bounds on each of the second order derivatives $U_{x_i x_j}$, as required in other existing change of variable formulae. 
\begin{remark}[Continuous differentiability of $U$]
It may appear that the requirement $U\in C^1(\R_+\times\R^m)$ be much stronger than the usual smooth-fit condition in optimal stopping, which refers to continuity of {\em directional} derivatives. However, the smooth-fit condition is normally proved relying upon convergence of $\tau_*$ to zero in the limit as the initial point $\bm X_0=\bm x$ of the underlying process approaches $\partial\cC$ along a direction parallel to the $x_1$-axis (in the parametrisation of \eqref{eq:setC}). Such convergence is essentially equivalent to the concept of `regularity' of $\partial\cC$ in the sense of diffusions, which would also imply continuous differentiability of $U$ as shown in \cite{DeAPe}. 
\end{remark}

Another important aspect of our contribution concerns the assumptions made on the boundary. Monotonicity of the boundary as in Assumption \ref{ass:1-3} is often sufficient to prove regularity of $\partial\cC$ in the sense of diffusions (see, e.g., \cite{CDeAP}) and therefore continuous differentiability of the value function as needed in Assumption \ref{ass:1-2}. In optimal stopping for multi-dimensional Markov processes, it is often prohibitively difficult to prove geometric properties of the boundary $\partial\cC$ beyond the existence of a surface $b_1$ as in \eqref{eq:setC} and its monotonicity in each variable (Assumption \ref{ass:1-3}). Therefore, an application of other existing formulae is not normally possible as those require continuity (or higher regularity) of the boundary and/or, in some cases, properties of the process $b_1(t,X^2,\ldots,X^{m})$ (see, e.g., \cite{peskir2007change}). To give a sense of the challenge, let us consider for a moment $T=\infty$ and $m=2$, with $\bm X$ time-homogenous, $\Pi^t_s(\bm X)\equiv 0$ and $G$ independent of time. In this case \eqref{eq:setC}--\eqref{eq:setD} are given in terms of a function $x_2\mapsto b_1(x_2)$ and, in order to apply the formula from \cite{peskir2007change}, one needs to verify that $t\mapsto b_1(X^2_t)$ is a semi-martingale process. In the easiest (non-trivial) case of $X^2$ being a Brownian motion, the semimartingale property is essentially equivalent to convexity/concavity (or semi-convexity/concavity) of the mapping $x_2\mapsto b_1(x_2)$. That property is generally beyond reach in optimal stopping (a notable exception is \cite[Prop.\ 9]{ernst} for a specific problem). In contrast, monotonicity of optimal stopping boundaries (for $m\ge 2$) turns out to be more easily obtained in numerous examples as in \cite{CDeAP}, \cite{johnson2017quickest}, \cite{deangelis2017} and \cite{de2017dividend}, which will be discussed in detail in the next subsection. Thus, Assumption \ref{ass:1-3} is in line with properties that one may expect to be able to prove for optimal boundaries in tractable optimal stopping problems.

As for Assumption \ref{ass:1-1}, the requirement that coefficients $\beta^{ij}$ be locally Lipschitz is satisfied in all examples we are aware of in the literature. Since in our framework $\partial\cC$ is a set of zero Lebesgue measure in $\R_+\times\R^m$, then the condition $\P((t,\bm X_{t-})\in\partial\cC)=0$ for a.e.\ $t\ge 0$ is satisfied as soon as the law of $\bm X$ is absolutely continuous with respect to the Lebesgue measure.

\begin{remark}[\bf Running rewards]
When $\bm A\equiv 0$ the framework above can be immediately extended to include running rewards in the objective function. Consider a problem of the form
\begin{align}\label{eq:1}
\tilde U(t,\bm x)=\sup_{t\le \tau\le T}\E_{t,\bm x}\Big[\int_t^\tau \e^{-\Pi^t_s (\bm X)} R(s,\bm X_s)\d s+ \e^{-\Pi^t_\tau (\bm X)} \tilde G(\tau,\bm X_\tau)\Big],
\end{align}
where $R:\R_+\times\R^m\to\R$ is a continuous function. Assume we can find a function $F$ such that 
\begin{align}\label{eq:2}
\E_{t,\bm x}\Big[\e^{-\Pi^t_\tau (\bm X)} F(\tau,\bm X_\tau)\Big]=F(t,\bm x)-\E_{t,\bm x}\Big[\int_t^\tau \e^{-\Pi^t_s (\bm X)} R(s,\bm X_s)\d s\Big],
\end{align}
for every stopping time $\tau\in[t,T]$. Then, we can reduce \eqref{eq:1} to \eqref{eq:OS} by combining the two expressions above into 
\begin{align}\label{eq:3}
U(t,\bm x):=(\tilde U-F)(t,\bm x)=\sup_{t\le \tau\le T}\E_{t,\bm x}\Big[\e^{-\Pi^t_\tau (\bm X)} G(\tau,\bm X_\tau)\Big],
\end{align}
where $G:=\tilde G- F$. 

Finding the function $F$ in \eqref{eq:2}, in the case when $\Pi^t_s (\bm X)=\int_t^s r(u,\bm X_u)\d u$, boils down to solving a relatively simple PDE. Letting $\cL$ be the infinitesimal generator of $\bm X$, we need $F$ to be a solution of a Cauchy problem
\[
(\partial_t F+\cL F- r F)(t,\bm x)=-R(t,\bm x),\quad(t,\bm x)\in[0,T]\times\R^m,
\]
with sufficiently slow growth at infinity (e.g., linear growth is often sufficient). Notice that the terminal condition $F(T,\bm x)=\Phi(\bm x)$ can be chosen arbitrarily as it will affect the form of $U=\tilde U-F$ but it will not alter the equivalence between the problem in \eqref{eq:1} and the one in \eqref{eq:3}. In other words, changing $\Phi$ only changes the shift in $U=\tilde U-F$ and $G=\tilde G- F$. 

Instead, in the case of $\bm A\neq 0$, \eqref{eq:1} cannot be reduced to \eqref{eq:OS} and the analysis must be performed on a case-by-case basis. \end{remark}

\subsection{Four specific examples}

Optimal stopping problems on multi-dimensional underlying processes are appearing with increasing frequency in the literature (a probabilistic study of optimal stopping boundaries for multi-dimensional diffusions can be found for example in \cite{CCMS}). Here we briefly review four specific examples that fit within our framework. We emphasise that existing change of variable formulae cannot be applied to any of the examples below whereas our formula can be applied to all of them. \vspace{+5pt}

{\em American options with stochastic interest rate}. In \cite{CDeAP} we study the classical American put option problem under stochastic discounting and we apply directly results from this paper in the proofs of Propositions 3.9 and 3.12 therein. The problem in \cite{CDeAP} is set on a finite-time horizon and there is a two-dimensional continuous process $(X^1,X^2)=(R,X)$ where $R$ represents the discount rate and $X$ the stock price. An optimal boundary exists and can be parametrised either as a function of $(t,R)$ or as a function of $(t,X)$. Monotonicity of the boundary is shown as needed in Assumption \ref{ass:1-3} (\cite[Propositions\ 3.3 and 3.4]{CDeAP}). From that it also follows regularity of the boundary in the sense of diffusions and continuous differentiability of the value function (\cite[Thm.\ 3.5]{CDeAP}). Therefore Assumption \ref{ass:1-2} holds by virtue of the discussion around \eqref{eq:LLL} above. In terms of notation, in \eqref{eq:OS} we should take $G(t,r,x)=(K-x)^+$ and $\Pi^t_s(\bm X)=\int_t^s R_u\d u$. In particular we notice that $G$ is not smooth in the whole space, but it can be easily shown that $\cD\subset \{(t,r,x) : x\le K\}$ and therefore $G\in C^\infty(\cD)$. The application of our change of variable formula allows us to obtain both an integral equation for the optimal exercise boundary and, somewhat more importantly, a rigorous derivation of the Delta hedging portfolio. Although it is possible to prove that the value function is convex in $x$, no further information is available about its second order spatial derivatives. Moreover, it is not possible from the analysis in \cite{CDeAP} to conclude that either $t\mapsto b_1(t,X_t)$ or $t\mapsto b_2(t,R_t)$ is a semi-martingale. These difficulties prevent the use of alternative change of variable formulae.
\vspace{+3pt}

{\em Quickest detection}. In the context of quickest detection problems, multi-dimensional situations arise for example in \cite{johnson2017quickest}, \cite{gapeev} and \cite{ekstrom2020multi}. With particular reference to \cite{johnson2017quickest} by Johnson and Peskir, our change of variable formula could be directly applied in that setup. The problem is set on an infinite-time horizon and the underlying process is two-dimensional diffusive with the notation $(X^1,X^2)=(\Phi,X)$. It is shown in the paper (Corollary 8 and Proposition 16) that the optimal stopping boundary is a continuous monotonic mapping $x\mapsto b(x)$ (hence our Assumption \ref{ass:1-3} holds) and it is optimal to stop when $\Phi$ exceeds the moving boundary $b(X)$. The Mayer formulation of the problem (i.e., the analogue of our \eqref{eq:OS}) is provided in \cite[Proposition 3]{johnson2017quickest} and it corresponds to taking $\Pi^t_s(\bm X)=\lambda(s-t)$ and $G(t,\varphi,x)=-(1+\varphi)x^2-\alpha$ where $\lambda,\alpha>0$ are constants (notice that the minus sign is due to the fact that in \cite{johnson2017quickest} they address a minimisation problem). Johnson and Peskir prove in their Proposition 14 that their value function is continuously differentiable once in both variables. Therefore the argument that follows \eqref{eq:LLL} above implies that our Assumption \ref{ass:1-2} holds as well. 

We notice that Johnson and Peskir derive a nonlinear integral equation (\cite[Theorem 19]{johnson2017quickest}) by a non-trivial modification of the change of variable formula from \cite{peskir2005change}. Indeed they cannot directly apply results in \cite{peskir2005change} or \cite{peskir2007change} because they cannot prove that the process $t\mapsto b(X_t)$ (or suitable transformations thereof) is a semi-martingale. So they adopt an {\em ad-hoc} procedure. First, they perform a change of coordinate that leads to a two-dimensional process $(U,\Phi)$ with $U$ a process of bounded variation (notice that the notation is slightly clashing with our \eqref{eq:OS} but no confusion shall arise). After the change of coordinate, they parametrise the boundary of the continuation set with a function $\tilde b(u)$ which is the analogue of $b(x)$ in the original coordinate system. Second, they construct an approximation $U^n$ of the process $U$ in order to guarantee that the corresponding process $\tilde b(U^n)$ be also of bounded variation. Finally, they apply the change of variable formula from \cite{peskir2005change} to the process $(U^n,\Phi)$ with boundary $\tilde b(U^n)$ and show that it is then possible to pass to the limit as $n\to\infty$. Our Theorem \ref{thm:main} instead is directly applicable and thus the delicate construction in \cite{johnson2017quickest} could be avoided. 
\vspace{+5pt}

Many problems of stochastic singular control can be linked to optimal stopping and be solved via free boundary methods (see, e.g., \cite{deangelis2017}, \cite{bandini2020optimal}, \cite{Fe2018} for multi-dimensional diffusive set-ups). For those problems our change of variable formula is also useful.
\vspace{+3pt}

{\em Irreversible investment}. In \cite{deangelis2017} an irreversible investment problem with stochastic costs is connected to an optimal stopping problem on a two-dimensional non-degenerate diffusion $(X^1,X^2)=(Y,X)$. The problem in \cite{deangelis2017} is presented in its Bolza formulation but using our notation and transforming it into the Mayer formulation we have $G(t,x,y)=-y-f(x)$ and $\Pi^t_s(\bm X)=r(s-t)$, where $r>0$ is constant and
\[
f(x)=\E_x\Big[\int_0^\infty \e^{-r s}c(X_s)\d s\Big]
\]
for a suitable function $c$ specified in \cite{deangelis2017}. It is proven in \cite[Proposition 4.1]{deangelis2017} that it is optimal to stop when the process $Y$ falls below an optimal boundary $b(X)$. The mapping $x\mapsto b(x)$ is increasing and continuous (\cite[Proposition 4.4]{deangelis2017}), hence our Assumption \ref{ass:1-3} holds. The value function of the problem is continuously differentiable once in both variables $(x,y)$ (\cite[Proposition 4.3]{deangelis2017}) so that using the arguments around \eqref{eq:LLL} also our Assumption \ref{ass:1-2} is satisfied. 

Since the authors were unable to use any of the change of variable formulae known at the time, they resorted to a lengthy approximation procedure, based on variational inequalities, in order to arrive at an integral equation for the optimal boundary. In particular, the authors noticed that they could not apply \cite{peskir2007change} because they could not prove that the process $b(X_t)$ was a semi-martingale. Moreover, they had no bounds on the second order derivatives of the value function so that also alternative methods seemed to fail. Our Theorem \ref{thm:main} is directly applicable and could significantly simplify the derivation of the integral equation for the boundary.\vspace{+3pt}

{\em The dividend problem}. In \cite{bandini2020optimal} the dividend problem with stochastic discounting is connected to an optimal stopping problem on a two-dimensional reflecting diffusion $(X^1,X^2)=(Z,R)$. Here $Z$ is a reflecting Brownian motion with drift, upward-reflecting at a constant threshold $\alpha>0$, and $R$ is a CIR process. In the notation of \eqref{eq:OS} we have $G(t,z,r)=1$ and 
\[
\Pi^t_s(\bm X)=\int_t^s\rho(R_u)\d u-\lambda (L^\alpha_{s}-L^\alpha_{t}),
\] 
where $\lambda>0$ is a constant, $\rho$ is a suitable function and $L^\alpha$ is the local time of $Z$ at the reflection point $\alpha$. The two-dimensional dynamics with reflection is covered by our set-up in \eqref{eq:X-sde}. It is proven in \cite{bandini2020optimal} that it is optimal to stop when $Z$ exceeds a moving boundary $b(R)$ where $r\mapsto b(r)$ is continuous and decreasing (Lemma 3.8 and Theorem 3.13). Hence our Assumption \ref{ass:1-3} holds. Once again, the authors cannot control the second order derivatives of the value function near the optimal boundary and neither can they prove that the process $b(R_t)$ is a semi-martingale. However, it is shown in \cite[Proposition 3.11]{bandini2020optimal} that the value function is continuously differentiable once in $(z,r)$, so that our Assumption \ref{ass:1-2} holds. Then, our change of variable formula in Theorem \ref{thm:main} applies to the set-up in \cite{bandini2020optimal}.

\begin{remark}
Applications of our formula are natural also in the proof of so-called {\em verification theorems} in problems of stochastic singular control, including irreversible or partially reversible investment and the dividend problem (see \cite[Ch.\ VII.4]{fleming} and \cite[Ch. 4.5]{pham2009continuous} for an extensive coverage of the topic and financial motivations). 

In those problems, the controlled dynamics is inherently c\`adl\`ag due to the action of the singular control. In the context of our stochastic dynamics \eqref{eq:X-sde}, we can think of the process $\bm A$ as of a generic control. 

A verification theorem would normally require two ingredients:
\begin{itemize}
\item[(i)] $C^{1,2}$-regularity of a (candidate) value function $U$, obtained as a solution of a suitable HJB equation (or alternatively, $W^{1,2,p}$-regularity for sufficiently large $p>1$);
\item[(ii)] Existence of an optimal control. That is often obtained by constructing a Skorokhod reflection of $\bm X$ at a sufficiently regular free boundary, e.g., Lipschitz continuous as in \cite{SS91} or higher regularity as in \cite{SS89}.
\end{itemize} 
The proof of a verification theorem would then proceed via an application of It\^o's formula to $U(t,\bm X_t)$ and the use of the HJB equation. Thanks to our Theorem \ref{thm:main}, the smoothness requirements on the (candidate) value function are milder than those in $(i)$ above. As for the free boundary, our Assumption \ref{ass:1-3} is not overly restrictive if one considers that condition $(ii)$ is however required. 
\end{remark}

\subsection{Some sufficient conditions for Assumption \ref{ass:1-3}}
The study of optimal stopping boundaries has historically been developed mainly through examples due to the technical difficulties in formulating general theoretical results. In this spirit, the application of our Theorem \ref{thm:main} is subject to the verification of Assumptions \ref{ass:1-1}--\ref{ass:1-3}, which should be performed on a case-by-case basis. As explained in Section \ref{subs:framework}, the monotonicity of the optimal boundary (i.e., Assumption \ref{ass:1-3}) is often the key to the proof of continuous differentiability of the value function (as in the four examples above) and to the continuity of the optimal boundary (see, e.g., \cite{peskir2019continuity}, \cite{CDeAP2}). Moreover, it is also needed for the proof of our Theorem \ref{thm:main}. Therefore, it seems worth it to present some easily verifiable conditions that imply Assumption \ref{ass:1-3}. We emphasise though that such conditions are far from being necessary.

For the ease of presentation we focus on $\bm X\in\R^3$ but all results extend to higher dimension up to obvious changes. We are in the setting of \eqref{eq:OS} and $\bm X$ follows the dynamics given in \eqref{eq:X-sde}. Throughout the section we assume: 
\begin{assumption}\label{ass:stand}
The following conditions are satisfied:
\begin{itemize}
\item [(i)] The process $\bm A$ is non-decreasing and \eqref{eq:X-sde} admits a unique strong solution. 
\item [(ii)] The discount rate $r(t,\bm X_{t-})=r\ge 0$ is constant in \eqref{eq:lambda}. 
\item[(iii)]For $i,j=1,2,3$ it holds $\sigma^{ij}(t,\bm x)=\sigma^{ij}(x_i)$ and $\gamma^i(t,\bm x)=\gamma^i=const$.
Moreover, there is a function $h:\R_+\to\R_+$ such that $\int_0^\eps h^{-2}(y)\d y=\infty$ for all $\eps>0$ and for $j=1,2,3$
\[
\big|\sigma^{1j}(x_1)-\sigma^{1j}(x_1')\big|\le h(|x_1-x_1'|).
\] 
\item [(iv)] For $i=2,3$ it holds $\alpha^i(t,\bm x)=\alpha^i(x_i)$ whereas $\alpha^1(t,\bm x)=\alpha^1(\bm x)$
and $z\mapsto \alpha^1(z,x_2,x_3)$ is Lipschitz continuous uniformly in $(x_2,x_3)$.
\item[(v)] The stopping time $\tau_*$ in \eqref{eq:taustar} is optimal and the continuation set is specified by \eqref{eq:setC}.
\end{itemize}
\end{assumption}
The process is time-homogeneous, the diffusion coefficient of the $i$-th coordinate depends at most on $X^i$ and the coefficient in front of $\bm A$ is constant. The coefficients in the dynamics of $X^2$ and $X^3$ only depend on $X^2$ and $X^3$, respectively, whereas $\alpha^1$ depends on the dynamics of $(X^1,X^2,X^3)$ and it is Lipschitz in its first variable. These assumptions, combined with monotonicity of $\bm A$, will enable the use of a simple variation on standard comparison principles for the trajectories of the process $\bm X$ starting from different initial conditions (see Appendix). Taking a constant discount rate simplifies the discussion below but it is shown in the setting of  \cite{bandini2020optimal} and \cite{CDeAP} that it is possible to consider stochastic discount rates and obtain monotonic optimal boundaries.

We consider two specific cases. In the first case the gain function is time homogeneous and it only depends explicitly on $X^1$, while it depends implicitly on $(X^2,X^3)$ via the drift of $X^1$. This situation is similar to the American put option problem studied in \cite{CDeAP} and we do not assume smoothness of $G$. An interpretation of this problem is that $G$ is the payoff of an American option written on an underlying asset with value $X^1$. The rate of return of such an asset (the drift of $X^1$) depends on the values of two other stochastic factors, $X^2$ and $X^3$. As observed above the drifts of $X^2$ and $X^3$ are decoupled, in the sense of $(iv)$ in Assumption \ref{ass:stand}. Notice however that $X^1$, $X^2$ and $X^3$ are also correlated by the Brownian motions $W^1$, $W^2$ and $W^3$ via the coefficients $\sigma^{ij}$. We state the next result for $G$ non-decreasing but it will be clear that an analogous result holds for non-increasing $G$, up to trivial changes. 
\begin{proposition}\label{prop:monot1}
Let Assumption \ref{ass:stand} hold. If $G(t,\bm x)=G(x_1)$, then $t\mapsto b_1(t,x_2,x_3)$ is non-decreasing. 
If $z\mapsto G(z)$ is non-decreasing, $z\mapsto \alpha^1(x_1,z,x_3)$ is non-decreasing and $z\mapsto \alpha^1(x_1,x_2,z)$ is non-increasing, then $x_2\mapsto b_1(t,x_2,x_3)$ is non-increasing and $x_3\mapsto b_1(t,x_2,x_3)$ is non-decreasing.
\end{proposition}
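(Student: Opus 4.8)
The plan is to deduce the geometry of the boundary $b_1$ from monotonicity properties of the value function $U$ in the corresponding variable, and to obtain those in turn from a pathwise comparison of copies of $\bm X$ started from ordered initial data. The underlying elementary remark is the following: by Assumption \ref{ass:stand}(v), the definition of $\cC$ in Section \ref{subs:framework} and \eqref{eq:setC}, one has $\cC=\{(t,\bm x):U(t,\bm x)>G(t,\bm x)\}=\{(t,\bm x):x_1>b_1(t,x_2,x_3)\}$, and since here $G(t,\bm x)=G(x_1)$ the payoff is constant in each of $t$, $x_2$, $x_3$. Hence, e.g., for fixed $(t,x_1,x_3)$ the set $\{x_2:x_1>b_1(t,x_2,x_3)\}$ coincides with $\{x_2:U(t,x_1,x_2,x_3)>G(x_1)\}$: if $x_2\mapsto U(t,x_1,x_2,x_3)$ is non-decreasing then the latter is an up-set, which forces $x_2\mapsto b_1(t,x_2,x_3)$ to be non-increasing. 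The same mechanism applies verbatim to the variables $t$ and $x_3$, the direction of monotonicity of $b_1$ being always the opposite of that of $U$. So it suffices to prove: \textbf{(a)} $t\mapsto U(t,\bm x)$ is non-increasing; and \textbf{(b)}, under the extra hypotheses of the second assertion, $x_2\mapsto U(t,\bm x)$ is non-decreasing and $x_3\mapsto U(t,\bm x)$ is non-increasing.

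Claim \textbf{(a)} is soft and uses no monotonicity of $G$: by time-homogeneity of the coefficients in Assumption \ref{ass:stand}, of $G$, and of the discount functional $\Pi^t_s=r(s-t)$, one has $U(t,\bm x)=v(T-t,\bm x)$ with $v(s,\bm x):=\sup_{0\le\tau\le s}\E_{0,\bm x}[\e^{-r\tau}G(X^1_\tau)]$, and $s\mapsto v(s,\bm x)$ is non-decreasing because enlarging the horizon enlarges the class of admissible stopping times. Hence $t\mapsto U(t,\bm x)$ is non-increasing (and in the infinite-horizon case $U$ is $t$-independent, so the claim is trivially satisfied). This yields the first assertion of the proposition.

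For \textbf{(b)}, fix $\bm x=(x_1,x_2,x_3)$ and $\bm y=(x_1,x_2',x_3)$ with $x_2\le x_2'$ and realise, on one probability space and driven by the same $\bm B$ and the same process $\bm A$, the two solutions $\bm X:=\bm X^{t,\bm x}$ and $\bm Y:=\bm X^{t,\bm y}$ of \eqref{eq:X-sde}. Under Assumption \ref{ass:stand}(iii)--(iv) the equations for the second and third coordinates are autonomous, so $X^3_s\equiv Y^3_s$ by pathwise uniqueness while $X^2_s\le Y^2_s$ for all $s\ge t$ by the one-dimensional comparison principle of the Appendix. Feeding this into the first coordinate, $X^1$ and $Y^1$ solve one-dimensional SDEs with the same initial value $x_1$, the same diffusion coefficients $\sigma^{1\cdot}(\cdot)$ (which satisfy the modulus condition in Assumption \ref{ass:stand}(iii)) and the same $\bm A$-term, while the drift $z\mapsto\alpha^1(z,X^2_s,X^3_s)$ of $X^1$ is pointwise dominated by the drift $z\mapsto\alpha^1(z,Y^2_s,Y^3_s)$ of $Y^1$, because $w\mapsto\alpha^1(z,w,x_3)$ is non-decreasing and $X^2\le Y^2$, $X^3=Y^3$. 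The comparison theorem then gives $X^1_s\le Y^1_s$ for all $s\ge t$, $\P$-a.s.; since $G$ is non-decreasing, $G(X^1_\tau)\le G(Y^1_\tau)$ for every stopping time $\tau$, and taking expectations and suprema (the discount factor being common to both problems) yields $U(t,\bm x)\le U(t,\bm y)$, i.e.\ $U$ is non-decreasing in $x_2$. The case of $x_3$ is identical after swapping roles: for $x_3\le x_3'$ one gets $X^2=Y^2$, $X^3\le Y^3$, and since $w\mapsto\alpha^1(z,x_2,w)$ is non-increasing the drift of $X^1$ now dominates that of $Y^1$, hence $X^1_s\ge Y^1_s$ and $U(t,\bm x)\ge U(t,\bm y)$. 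Combined with the reduction of the first paragraph, this proves the remaining assertions.

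The main obstacle is the pathwise comparison for the first component. For $X^2$ and $X^3$ the two processes solve the \emph{same} autonomous one-dimensional equation, so a `two solutions started ordered cannot cross' argument — relying only on pathwise uniqueness, itself implied by Assumption \ref{ass:stand}(i) and (iii) — suffices. For $X^1$, however, the two equations have \emph{different} drifts and share a diffusion coefficient that is merely $h$-continuous rather than Lipschitz, so one genuinely needs a comparison theorem of Yamada--Watanabe type; this is exactly what Assumption \ref{ass:stand}(iii)--(iv) is tailored to deliver, and the details are deferred to the Appendix. A secondary point that requires care is to propagate the comparison through the triangular coupling in the right order (the decoupled components $X^2,X^3$ first, then $X^1$), and to match the monotonicity directions of $\alpha^1$ in $x_2$ and $x_3$ with the resulting signs in the comparison for $X^1$, in $U$, and finally in $b_1$.
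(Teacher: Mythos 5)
Your proposal is correct and follows essentially the same route as the paper: time-monotonicity of $U$ via time-homogeneity and shrinking of the horizon, a triangular coupling comparing $X^2,X^3$ by pathwise uniqueness and then $X^1$ via the Yamada--Watanabe-type comparison principle of the Appendix (Proposition \ref{prop:comparison}) with the random drifts $\alpha^1(\cdot,X^2,X^3)$, and finally translating monotonicity of $U$ (with $G=G(x_1)$) into the opposite monotonicity of $b_1$. No gaps worth noting.
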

\begin{proof}
Since $G$ is independent of time and $\bm X$ is time-homogeneous, then 
\begin{align}\label{eq:Uhomo}
U(t,\bm x)=\sup_{t\le \tau\le T}\E_{t,\bm x}\Big[\e^{-r(\tau-t)} G(\bm X_\tau)\Big]=\sup_{0\le \tau\le T-t}\E_{0,\bm x}\Big[\e^{-r\tau} G(\bm X_\tau)\Big],
\end{align}
because the law of $(\bm X_s)_{s\ge t}$ under $\P_{t,\bm x}$ is the same as the law of $(\bm X_s)_{s\ge 0}$ under $\P_{0,\bm x}$ and the class of the stopping times is adjusted accordingly. From the final expression above it is not hard to verify that $t\mapsto U(t,\bm x)$ is non-increasing. In particular, that implies
\[
(t,\bm x)\in\cD\implies (s,\bm x)\in \cD,\quad \text{for all $s\in[t,T]$},
\] 
from which we conclude that $t\mapsto b_1(t,x_2,x_3)$ must be non-decreasing.

Next we want to prove that for any $\eps>0$
\begin{align}\label{eq:Umon}
U(t,x_1,x_2+\eps,x_3)\ge U(t,x_1,x_2,x_3)\quad\text{and}\quad U(t,x_1,x_2,x_3+\eps)\le U(t,x_1,x_2,x_3).
\end{align}
Indeed, if \eqref{eq:Umon} holds then, thanks to the fact that $G(t,\bm x)=G(x_1)$, we have 
\[
U(t,x_1,x_2+\eps,x_3)-G(x_1)\ge U(t,x_1,x_2,x_3)-G(x_1)
\]
and   
\[
U(t,x_1,x_2,x_3+\eps)-G(x_1)\le U(t,x_1,x_2,x_3)-G(x_1),
\]
which imply $x_2\mapsto b_1(t,x_2,x_3)$ non-increasing and $x_3\mapsto b_1(t,x_2,x_3)$ non-decreasing, respectively.

In order to prove the first inequality in \eqref{eq:Umon}, let us set ${\bm x}_\eps=(x_1,x_2+\eps,x_3)$ and denote by $X^{i;{\bm x}_\eps}$ the $i$-th coordinate of the process $\bm X$ started from $\bm X_0={\bm x}_\eps$. Analogously we use $X^{i;{\bm x}}$ for the $i$-th coordinate of the process $\bm X$ started from $\bm X_0={\bm x}$. Thanks to $(iii)$ and $(iv)$ in Assumption \ref{ass:stand}, it is clear that $X^{3;\bm x_\eps}_s=X^{3;\bm x}_s$ and $X^{2;\bm x_\eps}_s\ge X^{2;\bm x}_s$ for all $s\ge 0$, $\P$-a.s.\ by pathwise uniqueness. It then follows that 
\[
\alpha^1(\,\cdot\,,X^{2;\bm x_\eps}_s,X^{3;\bm x_\eps}_s)\ge \alpha^1(\,\cdot\,,X^{2;\bm x}_s,X^{3;\bm x}_s).
\]
The comparison principle in Proposition \ref{prop:comparison} in Appendix applies with $C_t=\gamma^1 A^1_t$, 
\[
\eta^1(\omega,s,y)= \alpha^1(y,X^{2;\bm x}_s(\omega),X^{3;\bm x}_s(\omega))\quad\text{and}\quad\eta^2(\omega,s,y)= \alpha^1(y,X^{2;\bm x_\eps}_s(\omega),X^{3;\bm x_\eps}_s(\omega)).
\]
Then $X^{1;\bm x_\eps}_s\ge X^{1;\bm x}_s$ for all $s\ge 0$, $\P$-a.s. The assumed monotonicity of $G$ then implies $G(X^{1;\bm x_\eps}_\tau)\ge G(X^{1;\bm x}_\tau)$, which translates into
\[
U(t,\bm x_\eps)=\sup_{0\le \tau\le T-t}\E_{0,\bm x_\eps}\Big[\e^{-r\tau} G(\bm X_\tau)\Big]\ge\sup_{0\le \tau\le T-t}\E_{0,\bm x}\Big[\e^{-r\tau} G(\bm X_\tau)\Big]= U(t,\bm x)
\]
as needed. The second inequality in \eqref{eq:Umon} is proven by analogous arguments and we omit further details for brevity.
\end{proof}

In the second case, we remove the assumption of time-homogeneity of the payoff and we allow it to depend on all three coordinates of $\bm X$. However, we require an additional assumption on the smoothness of $G$. The next proposition extends to our (multi-dimensional) setting ideas contained in \cite{sd1992finite} for the case of $\bm X\in\R$ and without the bounded variation term $\bm A$ in the dynamics. We introduce some necessary notation: for $G\in C^{1,2}([0,T]\times\R^3)$ we let 
\[
H(t,\bm x):=\Big(G_t+\tfrac{1}{2}\sum_{i,j=1}^3\beta^{ij}G_{x_i x_j}+\sum_{i=1}^3\alpha^i G_{x_i}-rG\Big)(t,\bm x)
\]
and we will always assume that Dynkin's formula holds, i.e., for any stopping time $\tau\in[0,T-t]$
\begin{align}\label{eq:hL2}
&\E_{0,\bm x}\Big[\e^{-r\tau} G(t+\tau,\bm X_\tau)\Big]=G(t,\bm x)\\
&\qquad+\!\E_{0,\bm x}\Big[\!\int_0^{\tau}\!\!\e^{-r s}H(t\!+\!s,\bm X_s)\d s\!+\!\sum_{i=1}^3\!\int_0^\tau\!\!\e^{-r s}\gamma^i G_{x_i}(t\!+\!s,\bm X_{s-})\d A^{c,i}_s\Big]\notag\\
&\qquad+\!\E_{0,\bm x}\Big[\sum_{s\le \tau}\big(G(t\!+\!s,\bm X_{s})-G(t\!+\!s,\bm X_{s-})\big)\Big].\notag
\end{align}
The latter is guaranteed if, for example, $G_{x_i}$ is bounded for $i=1,2,3$. 

The aim of the next proposition is to illustrate a method to check monotonicity of the boundary in practical problems. We do not claim to provide the most general result possible because that is outside the scope of this section.
\begin{proposition}
Let Assumption \ref{ass:stand} hold and assume $G\in C^{1,2}(\R_+\times\R^3)$ and \eqref{eq:hL2}. Let us further assume $\gamma^i\ge 0$ for $i=1,2,3$.
\begin{itemize}
\item[(i)] If $\bm A\equiv 0$ and $t\mapsto H(t,\bm x)$ is non-increasing, then $t\mapsto b_1(t,x_2,x_3)$ is non-decreasing.
\item[(ii)] If $\bm A\neq 0$ and $t\mapsto H(t,\bm x)$, $t\mapsto G_{x_i}(t,\bm x)$ are non-increasing for $i=1,2,3$, then $t\mapsto b_1(t,x_2,x_3)$ is non-decreasing.
\item[(iii)] If $\bm A\equiv 0$ and 
\begin{itemize}
\item[(iii.a)] $z\mapsto \alpha^1(x_1,z,x_3)$ is non-decreasing and $z\mapsto \alpha^1(x_1,x_2,z)$ is non-increasing; 
\item[(iii.b)] $z\mapsto H(z,x_2,x_3)$, $z\mapsto H(x_1,z,x_3)$ are non-decreasing and $z\mapsto H(x_1,x_2,z)$ is non-increasing;
\end{itemize}
Then $z\mapsto b_1(t,z,x_3)$ is non-increasing and $z\mapsto b_1(t,x_2,z)$ is non-decreasing.
\item[(iv)] If $\bm A\neq 0$, conditions $(iii.a)$ and $(iii.b)$ hold and for $i=1,2,3$
\begin{itemize}
\item[(iv.a)] $z\mapsto G_{x_i}(z,x_2,x_3)$, $z\mapsto G_{x_i}(x_1,z,x_3)$ are non-decreasing and $z\mapsto G_{x_i}(x_1,x_2,z)$ is non-increasing;
\end{itemize}
Then $z\mapsto b_1(t,z,x_3)$ is non-increasing and $z\mapsto b_1(t,x_2,z)$ is non-decreasing.
\end{itemize}
\end{proposition}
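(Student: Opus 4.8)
The plan is to reduce the whole statement, following the proof of Proposition \ref{prop:monot1}, to a monotonicity property of the map $(t,\bm x)\mapsto U(t,\bm x)-G(t,\bm x)$, which is everywhere non-negative and vanishes exactly on $\cD$. Since $\bm X$ is time-homogeneous and $r$ is constant, $U(t,\bm x)=\sup_{0\le\tau\le T-t}\E_{0,\bm x}[\e^{-r\tau}G(t+\tau,\bm X_\tau)]$; subtracting $G(t,\bm x)$ and inserting Dynkin's formula \eqref{eq:hL2} yields
\[
U(t,\bm x)-G(t,\bm x)=\sup_{0\le\tau\le T-t}\E_{0,\bm x}[\Phi^t_\tau],
\]
with
\begin{align*}
\Phi^t_\tau:={}&\int_0^\tau\e^{-rs}H(t+s,\bm X_s)\,\d s+\sum_{i=1}^3\int_0^\tau\e^{-rs}\gamma^i G_{x_i}(t+s,\bm X_{s-})\,\d A^{c,i}_s\\
&+\sum_{s\le\tau}\big(G(t+s,\bm X_s)-G(t+s,\bm X_{s-})\big).
\end{align*}
If, for every stopping time $\tau$, the random variable $\Phi^t_\tau$ is monotone in $t$ (at fixed $\bm x$) or in one coordinate of the starting point (at fixed remaining coordinates), in a prescribed direction, then so is $U-G$, and the monotonicity of $b_1$ follows from the ``once in $\cD$, stay in $\cD$'' argument used in Proposition \ref{prop:monot1}. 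Throughout I would rewrite the jump term, using that $\Delta\bm X_s=(\gamma^1\Delta A^1_s,\gamma^2\Delta A^2_s,\gamma^3\Delta A^3_s)$ has non-negative components (as $\gamma^i\ge 0$ and $\bm A$ is non-decreasing), as $G(t+s,\bm X_s)-G(t+s,\bm X_{s-})=\int_0^1\sum_{i=1}^3\gamma^i\Delta A^i_s\,G_{x_i}(t+s,\bm X_{s-}+\theta\Delta\bm X_s)\,\d\theta$, so that its sign and its monotonicity are inherited directly from those of $G_{x_i}$.

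For cases (i)--(ii) (monotonicity in $t$) I would fix $\bm x$ and $t_1<t_2$. By time-homogeneity, $\Phi^{t_1}_\tau$ and $\Phi^{t_2}_\tau$ are built along the \emph{same} trajectory of $\bm X$, the only change being that the time arguments of $H$ and $G_{x_i}$ are shifted. If $\bm A\equiv 0$ (case (i)), only the $H$-integral is present and $t\mapsto H(t,\cdot)$ non-increasing gives $\Phi^{t_2}_\tau\le\Phi^{t_1}_\tau$ pathwise. If $\bm A\neq 0$ (case (ii)), the extra assumption that $t\mapsto G_{x_i}(t,\cdot)$ is non-increasing, together with $\gamma^i\ge 0$ and $\d A^{c,i}_s\ge 0$, makes the $\d A^{c,i}$-integrals non-increasing in $t$, and the jump representation above makes the jump sum non-increasing in $t$ as well, so again $\Phi^{t_2}_\tau\le\Phi^{t_1}_\tau$. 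In both cases $U-G$ is non-increasing in $t$, hence $t\mapsto b_1(t,x_2,x_3)$ is non-decreasing.

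For cases (iii)--(iv) (monotonicity in $x_2,x_3$) I would argue exactly as in Proposition \ref{prop:monot1}. Fix $\eps>0$ and, first, set $\bm x_\eps=(x_1,x_2+\eps,x_3)$; let $\bm X^{\bm x_\eps}$ and $\bm X^{\bm x}$ be the strong solutions of \eqref{eq:X-sde} driven by the same Brownian motion and the same $\bm A$. Pathwise uniqueness gives $X^{3;\bm x_\eps}_s=X^{3;\bm x}_s$ and $X^{2;\bm x_\eps}_s\ge X^{2;\bm x}_s$; then (iii.a) ($z\mapsto\alpha^1(x_1,z,x_3)$ non-decreasing) together with the comparison principle of Proposition \ref{prop:comparison} (applied with $C_t=\gamma^1A^1_t$) gives $X^{1;\bm x_\eps}_s\ge X^{1;\bm x}_s$. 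Now (iii.b) says exactly that $H$ is non-decreasing in $x_1,x_2$ and non-increasing in $x_3$, so $H(t+s,\bm X^{\bm x_\eps}_s)\ge H(t+s,\bm X^{\bm x}_s)$; when $\bm A\neq 0$, (iv.a) says the same for each $G_{x_i}$, and since $\Delta\bm X_s$ is the same along both trajectories with non-negative components, both the $\d A^{c,i}$-integrals and the jump sum are larger along $\bm X^{\bm x_\eps}$. Hence $\Phi^t_\tau$ is larger along $\bm X^{\bm x_\eps}$, pathwise for every $\tau$; taking expectations and the supremum over $\tau$ gives $(U-G)(t,\bm x_\eps)\ge(U-G)(t,\bm x)$, which (since $U-G\ge 0$) forces $z\mapsto b_1(t,z,x_3)$ non-increasing. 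Taking instead $\bm x_\eps=(x_1,x_2,x_3+\eps)$ and using the reversed monotonicity $z\mapsto\alpha^1(x_1,x_2,z)$ non-increasing in (iii.a) (so that $X^{1;\bm x_\eps}_s\le X^{1;\bm x}_s$, $X^{2;\bm x_\eps}_s=X^{2;\bm x}_s$, $X^{3;\bm x_\eps}_s\ge X^{3;\bm x}_s$) reverses every inequality and yields $(U-G)(t,\bm x_\eps)\le(U-G)(t,\bm x)$, hence $z\mapsto b_1(t,x_2,z)$ non-decreasing.

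The comparison of trajectories is routine and entirely taken care of by Proposition \ref{prop:comparison}; the step that needs genuine care is the bookkeeping for $\Phi^t_\tau$, namely verifying that with the sign conventions of (iii.a)--(iii.b) and (iv.a) the $H$-integral, the $\d A^{c,i}$-integrals and the jump sum are all monotone in the same direction. The jump sum is the delicate term: its monotonicity can only be read off after writing $\Delta G$ along $\Delta\bm X_s\ge 0$ and invoking the monotonicity of $G_{x_i}$, which is precisely why the extra hypothesis (iv.a) on $G_{x_i}$ is imposed when $\bm A\neq 0$ while it is unnecessary when $\bm A\equiv 0$.
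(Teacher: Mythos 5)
Your proposal is correct and follows essentially the same route as the paper: both express $U-G$ through Dynkin's formula \eqref{eq:hL2} as a supremum of the functional $\Phi^t_\tau$, exploit monotonicity of $H$ and $G_{x_i}$ together with the pathwise comparison of Proposition \ref{prop:comparison} (with $C_t=\gamma^1 A^1_t$), and control the jump terms via the representation $G(t+s,\bm X_s)-G(t+s,\bm X_{s-})=\int_0^1\langle\nabla G(t+s,\bm X_{s-}+u\,\bm\gamma\Delta\bm A_s),\bm\gamma\Delta\bm A_s\rangle\,\d u$ with $\bm\gamma\Delta\bm A_s\ge 0$. The only point you leave implicit in cases (i)--(ii), which the paper states explicitly, is that the supremum for the later time is taken over the smaller class of stopping times $[0,T-t_2]\subset[0,T-t_1]$, a trivially filled detail.
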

\begin{proof}
We start by proving $(ii)$. The proof of $(i)$ is analogous but easier, thus it is omitted. To prove $(ii)$ it suffices to show that for any $\eps\in(0,t)$ 
\begin{align}\label{eq:mon-t} 
U(t,\bm x)-G(t,\bm x)\le U(t-\eps,\bm x)-G(t-\eps,\bm x).
\end{align}
To keep a compact notation we set $t_\eps=t-\eps$. Thanks to \eqref{eq:hL2} 
\begin{align*}
&U(t,\bm x)-G(t,\bm x)\\
&=\sup_{0\le\tau\le T-t}\E_{0,\bm x}\Big[\!\int_0^{\tau}\!\!\e^{-r s}H(t\!+\!s,\bm X_s)\d s\!+\!\sum_{i=1}^3\!\int_0^\tau\!\!\e^{-r s}\gamma^i G_{x_i}(t\!+\!s,\bm X_{s-})\d A^{c,i}_s\notag\\
&\qquad\qquad\qquad\qquad+\sum_{s\le \tau}\big(G(t\!+\!s,\bm X_{s})-G(t\!+\!s,\bm X_{s-})\big)\Big]\\
&\le \sup_{0\le\tau\le T-t_\eps}\E_{0,\bm x}\Big[\!\int_0^{\tau}\!\!\e^{-r s}H(t_\eps\!+\!s,\bm X_s)\d s\!+\!\sum_{i=1}^3\!\int_0^\tau\!\!\e^{-r s}\gamma^i G_{x_i}(t_\eps\!+\!s,\bm X_{s-})\d A^{c,i}_s\notag\\
&\qquad\qquad\qquad\qquad+\sum_{s\le \tau}\big(G(t_\eps\!+\!s,\bm X_{s})-G(t_\eps\!+\!s,\bm X_{s-})\big)\Big]\\
&=U(t_\eps,\bm x)-G(t_\eps,\bm x),
\end{align*}
where the inequality uses the monotonicity of $H$ and $G_{x_i}$ and the fact that $[0,T-t]\subset[0,T-t_\eps]$, i.e., the class of admissible stopping times for $U(t_\eps,\bm x)$ is larger than for $U(t,\bm x)$. It is worth noticing that for the jump terms we are indeed using 
\begin{align}\label{eq:gradG}
&G(t\!+\!s,\bm X_{s})-G(t\!+\!s,\bm X_{s-})=\int_0^1\langle\nabla G(t\!+\!s,\bm X_{s-}\!+\!u\,\bm\gamma\Delta\bm A_s),\bm\gamma\Delta\bm A_s\rangle\d u\\
&\le\int_0^1\langle\nabla G(t_\eps\!+\!s,\bm X_{s-}\!+\!u\,\bm\gamma\Delta\bm A_s),\bm\gamma\Delta\bm A_s\rangle\d u= G(t_\eps\!+\!s,\bm X_{s})-G(t_\eps\!+\!s,\bm X_{s-})\notag
\end{align}
where $\langle\cdot ,\cdot\rangle$ is the scalar product, $\bm\gamma\Delta\bm A:=(\gamma^1\Delta A^1,\gamma^2\Delta A^2,\gamma^3\Delta A^3)$ and the inequality holds because $\gamma^i\Delta A^i_s\ge 0$ and $G_{x_i}$ is monotonic in time.

Now we prove $(iv)$. The proof of $(iii)$ is analogous but easier and thus it is omitted. It suffices to prove that for any $\eps>0$
\begin{equation}\label{eq:Umon2}
\begin{split}
&(U-G)(t,x_1,x_2+\eps,x_3)\ge (U-G)(t,x_1,x_2,x_3)\quad\text{and}\\
&(U-G)(t,x_1,x_2,x_3+\eps)\le (U-G)(t,x_1,x_2,x_3).
\end{split}
\end{equation}
In order to prove the first inequality in \eqref{eq:Umon2}, let us set ${\bm x}_\eps=(x_1,x_2+\eps,x_3)$ and denote by $X^{i;{\bm x}_\eps}$ the $i$-th coordinate of the process $\bm X$ started from $\bm X_0={\bm x}_\eps$. Analogously we use $X^{i;{\bm x}}$ for the $i$-th coordinate of the process $\bm X$ started from $\bm X_0={\bm x}$. Thanks to $(iii)$ and $(iv)$ in Assumption \ref{ass:stand}, it is clear that $X^{3;\bm x_\eps}_s=X^{3;\bm x}_s$ and $X^{2;\bm x_\eps}_s\ge X^{2;\bm x}_s$ for all $s\ge 0$, $\P$-a.s.\ by pathwise uniqueness. It then follows that 
\[
\alpha^1(\,\cdot\,,X^{2;\bm x_\eps}_s,X^{3;\bm x_\eps}_s)\ge \alpha^1(\,\cdot\,,X^{2;\bm x}_s,X^{3;\bm x}_s).
\]
Analogously to the proof of Proposition \ref{prop:monot1} we apply the comparison principle in Proposition \ref{prop:comparison} in Appendix and obtain $X^{1;\bm x_\eps}_s\ge X^{1;\bm x}_s$ for all $s\ge 0$, $\P$-a.s. It follows by the monotonicity of $H$ and $G_{x_i}$ that
\[
H(t+s,\bm X^{\bm x_\eps}_s)\ge H(t+s,\bm X^{\bm x}_s)\quad\text{and}\quad G_{x_i}(t+s,\bm X^{\bm x_\eps})\ge G_{x_i}(t+s,\bm X^{\bm x}), 
\]
for $i=1,2,3$. By the inequalities above we obtain
\begin{align*}
&U(t,\bm x_\eps)-G(t,\bm x_\eps)\\
&=\sup_{0\le\tau\le T-t}\E_{0,\bm x_\eps}\Big[\!\int_0^{\tau}\!\!\e^{-r s}H(t\!+\!s,\bm X_s)\d s\!+\!\sum_{i=1}^3\!\int_0^\tau\!\!\e^{-r s}\gamma^i G_{x_i}(t\!+\!s,\bm X_{s-})\d A^{c,i}_s\notag\\
&\qquad\qquad\qquad\qquad+\sum_{s\le \tau}\int_0^1\langle\nabla G(t\!+\!s,\bm X_{s-}\!+\!u\,\bm\gamma\Delta\bm A_s),\bm\gamma\Delta\bm A_s\rangle\d u\Big]\\
&\ge \sup_{0\le\tau\le T-t}\E_{0,\bm x}\Big[\!\int_0^{\tau}\!\!\e^{-r s}H(t\!+\!s,\bm X_s)\d s\!+\!\sum_{i=1}^3\!\int_0^\tau\!\!\e^{-r s}\gamma^i G_{x_i}(t\!+\!s,\bm X_{s-})\d A^{c,i}_s\notag\\
&\qquad\qquad\qquad\qquad+\sum_{s\le \tau}\int_0^1\langle\nabla G(t\!+\!s,\bm X_{s-}\!+\!u\,\bm\gamma\Delta\bm A_s),\bm\gamma\Delta\bm A_s\rangle\d u\Big]\\
&=U(t,\bm x)-G(t,\bm x),
\end{align*}
as needed. The second inequality in \eqref{eq:Umon2} can be proven by analogous arguments up to obvious changes.
\end{proof}

\section{Proof of Theorem \ref{thm:main}}\label{sec:proof}

We first prove our result in Section \ref{sec:easy}, in the case when 
\begin{align}\label{eq:monot}
\text{$b_1$ is non-decreasing in $t$ and in $x_i$, for $i=2,\ldots,m$.}
\end{align} 
The remaining cases in Assumption \ref{ass:1-3} will be discussed later, in Section \ref{sec:med}, as they only require minor changes to the arguments of proof. 

\subsection{Proof under \eqref{eq:monot}}\label{sec:easy}
We regularise our function $U$ to obtain an approximating sequence 
\[
(U^n)_{n\ge 1}\subset C^{1,2}(\R_+\times\R^m)
\] 
defined by
\begin{equation}
\label{eq:u_n}
\begin{aligned}
U^{n}(t,\bm{x}):&=n^m\int_{x_1}^{x_1+1/n}...\int_{x_m}^{x_m+1/n} U(t,z_1,\ldots,z_m)\d z_1\ldots\d z_m\\
&=n^m\int_{\Lambda_{n}(\bm{x})}U(t,\bm{z})\d\bm{z},
\end{aligned}
\end{equation}
where $\Lambda_n(\bm x):=\bigtimes_{k=1}^{m}[x_k,x_k+1/n]$. 
Since $U \in C^1(\R_+\times\R^m)$, then it is clear that $U^n \in C^{1,2}(\R_+\times\R^m)$. Its derivatives read
\begin{align}
&U^n_{t}(t,\bm x)=n^m\int_{\Lambda_n(\bm x)}U_t(t,\bm{z})\d \bm{z},\label{eq:un_t}\\
&U^n_{x_i}(t,\bm x)=n^m\int_{\Lambda_n(\bm x)}U_{x_i}(t,\bm{z})\d \bm{z},\label{eq:un_i}
\end{align}
\begin{align}
&U^n_{x_i x_j}(t,\bm x)=n^m\int_{\Lambda^{-i}_n(\bm x)}\big[U_{x_j}(t,x_i+1/n,\bm{z}_{-i})-U_{x_j}(t,x_i,\bm{z}_{-i})\big]\d \bm{z}_{-i}\label{eq:un_ij}\\ 
&\qquad\qquad\:\:=n^m\int_{\Lambda^{-j}_n(\bm x)}\big[U_{x_i}(t,x_j+1/n,\bm{z}_{-j})-U_{x_i}(t,x_j,\bm{z}_{-j})\big]\d \bm{z}_{-j},\nonumber
\end{align} 
for any $i,j \in 1,\ldots,m$, where we use the notations 
\begin{equation}\label{notations}
\begin{split}
&\Lambda^{-i}_n(\bm x):=\big(\bigtimes_{k=1}^{i-1}[x_k,x_k+1/n]\big)\times \big(\bigtimes_{k=i+1}^{m}\![x_k,x_k+1/n]\big),\\
&\bm{z}_{-i}:=(z_1,\ldots z_{i-1},z_{i+1},\ldots z_m),
\end{split}
\end{equation}
and the second equality in \eqref{eq:un_ij} is simply by integration.
Although $U_{x_i x_j}$ fails to be continuous at the boundary $\partial\cC$, for each $(t,\bm x) \notin \partial\cC$ there is a large enough $n$ such that
\begin{align*}
&U^n_{x_i x_j}(t,\bm x)=n^m\int_{\Lambda_n(\bm x)}U_{x_i x_j}(t,\bm{z})\d\bm{z}.
\end{align*} 
Consequently, for $i,j=1,...,m$, and for any compact $K\subset \R_+\times\R^m$ we have 
\begin{equation}
\label{eq:ptwconv-1}
\begin{aligned}
&\lim_{n\uparrow \infty} \sup_{(t,\bm x)\in K}\Big(\big|U^n-U\big|(t,\bm x)+\big|U^n_{t}-U_{t}\big|(t,\bm x)+\sum_{i=1}^m\big|U^n_{x_i}-U_{x_i}\big|(t,\bm x)\Big)=0,\\
&\lim_{n\uparrow \infty} U^n_{x_i x_j}(t,\bm x)=U_{x_i x_j}(t,\bm x), \quad \text{for all $(t,\bm x) \in \big(\R_+\times\R^m\big)\setminus \partial \cC$}. 
\end{aligned}
\end{equation}

For $\delta>0$, let us set 
\begin{equation}
\label{eq:V-delta}
V^{\delta}:=[0,1/\delta]\times[-1/\delta,1/\delta]^m,
\end{equation}
and
\begin{equation}
\label{eq:tau_de}
\tau_\delta:=\inf\{t\ge 0: (t,\bm{X}_t)\notin V^{\delta}\}.
\end{equation}

Applying It\^o's formula to $U^n(t\wedge\tau_{\delta},\bm X_{t\wedge\tau_{\delta}})$, we obtain 
\begin{equation}
\label{eq:ito-un}
\begin{aligned}
U^n&(t\wedge\tau_{\delta},\bm{X}_{t\wedge\tau_{\delta}})=U^n(0,\bm{x})\\
&+\int_0^{t\wedge\tau_{\delta}} \!\Big[\Big(U^n_t+\sum_{i=1}^m \alpha^i U^n_{x_i}\Big)(u,\bm{X}_{u-})+\tfrac{1}{2}\sum_{i,j=1}^m\mathds{1}_{\{(u,\bm{X}_{u-})\notin\partial\cC\}}\big(\beta^{ij}U^n_{x_i x_j}\big)(u,\bm{X}_{u-})\Big]\d u\\
&+\sum_{i=1}^m\int_0^{t\wedge\tau_{\delta}}\! \big(\gamma^i U^n_{x_i}\big)(u,\bm{X}_{u-})\d A^{i}_u\\
&+\!\sum_{u\le t\wedge\tau_{\delta}}\!\!\Big(U^n(u,\bm X_u)\!-\!U^n(u,\bm X_{u-})\!-\!\sum_{i=1}^m \big(\gamma^i U^n_{x_i}\big)(u,\bm X_{u-})\Delta A^{i}_u\Big)\\
&+\sum_{i,j=1}^{m}\int_{0}^{t\wedge\tau_{\delta}}U^n_{x_i}(u,\bm{X}_{u-})\sigma^{ij}(u,\bm{X}_{u-}) \d B^{j}_u,\qquad\text{for $t\in [0,\infty)$, $\P$-a.s.}
\end{aligned}
\end{equation}
having also used $\P((t,\bm{X}_{t-})\in\partial \cC)=0$ for a.e.\ $t\ge 0$ by Assumption \ref{ass:1-1}. Since the jumps of the process $\bm X$ only arise from the bounded variation process $\bm A$, we can also simplify the expression above by writing
\begin{align}\label{eq:jj}
&\sum_{i=1}^m\int_0^{t\wedge\tau_{\delta}} \big(\gamma^i U^n_{x_i}\big)(u,\bm{X}_{u-})\d A^{i}_u\notag\\
&+\sum_{u\le t\wedge\tau_{\delta}}\Big(U^n(u,\bm X_u)-U^n(u,\bm X_{u-})-\sum_{i=1}^m \big(\gamma^i U^n_{x_i}\big)(u,\bm X_{u-})\Delta A^{i}_u\Big)\\
&=\sum_{i=1}^m\int_0^{t\wedge\tau_{\delta}} \big(\gamma^i U^n_{x_i}\big)(u,\bm{X}_{u-})\d A^{c,i}_u+\sum_{u\le t\wedge\tau_{\delta}}\Big(U^n(u,\bm X_u)-U^n(u,\bm X_{u-})\Big),\notag
\end{align}  
by using the decomposition $A^{i}_t=A^{c,i}_t+\sum_{s\le t}\Delta A^i_s$ with $A^{c,i}$ the continuous part of the process $A^i$. Letting $n\to\infty$ (possibly along a subsequence) all terms involving only $U^n$ and its first derivatives (including the stochastic integral and the jump terms) converge to their analogue for the function $U$, thanks to the uniform convergence in \eqref{eq:ptwconv-1} and the fact that $(u,\bm X_{u-})\in V^\delta$ for $u\in[0,t\wedge\tau_{\delta}]$. 

It is worth looking in some detail at the jump terms. First we split the sum of jumps as
\begin{align*}
&\sum_{u\le t\wedge\tau_{\delta}}\Big(U^n(u,\bm X_u)-U^n(u,\bm X_{u-})\Big)\\
&=\sum_{u< t\wedge\tau_{\delta}}\Big(U^n(u,\bm X_u)-U^n(u,\bm X_{u-})\Big)+\Big(U^n(t\wedge\tau_{\delta},\bm X_{t\wedge\tau_{\delta}})-U^n(t\wedge\tau_{\delta},\bm X_{t\wedge\tau_{\delta}-})\Big).
\end{align*}
For the final term, letting $n\to\infty$ we can use pointwise convergence to get 
\[
\lim_{n\to\infty}\Big(U^n(t\wedge\tau_{\delta},\bm X_{t\wedge\tau_{\delta}})-U^n(t\wedge\tau_{\delta},\bm X_{t\wedge\tau_{\delta}-})\Big)=\Big(U(t\wedge\tau_{\delta},\bm X_{t\wedge\tau_{\delta}})-U(t\wedge\tau_{\delta},\bm X_{t\wedge\tau_{\delta}-})\Big).
\]
For the remaining sum we have 
\begin{align*}
&\sum_{u< t\wedge\tau_{\delta}}\Big(U^n(u,\bm X_u)-U^n(u,\bm X_{u-})\Big)\\
&=\sum_{u< t\wedge\tau_{\delta}}\int_0^1\langle\nabla U^n(u,\bm X_{u-}+\lambda\bm\gamma(u,\bm X_{u-})\Delta \bm A_u),\bm\gamma(u,\bm X_{u-})\Delta \bm A_u\rangle \d \lambda,
\end{align*}
where $\bm\gamma(u,\bm X_{u-})\Delta \bm A_u$ is the vector with entries $\gamma^i(u,\bm X_{u-})\Delta A^i_u$. We claim that the dominated convergence theorem holds and therefore it gives
\begin{equation}\label{eq:dom}
\begin{split}
&\lim_{n\to\infty}\sum_{u< t\wedge\tau_{\delta}}\int_0^1\langle\nabla U^n(u,\bm X_{u-}+\lambda\bm\gamma(u,\bm X_{u-})\Delta \bm A_u),\bm\gamma(u,\bm X_{u-})\Delta \bm A_u\rangle \d \lambda\\
&=\sum_{u< t\wedge\tau_{\delta}}\int_0^1\langle\nabla U(u,\bm X_{u-}+\lambda\bm\gamma(u,\bm X_{u-})\Delta \bm A_u),\bm\gamma(u,\bm X_{u-})\Delta \bm A_u\rangle \d \lambda,
\end{split}
\end{equation}
which concludes convergence of the jump terms in \eqref{eq:ito-un} and \eqref{eq:jj}.
It remains to justify the use of the dominated convergence theorem in \eqref{eq:dom}. 

For $(t,\bm x)\in V^\delta$ and sufficiently large $n$ it holds $\|\nabla U^n(t,\bm x)\|\le 1+\|\nabla U(t,\bm x)\|\le c_\delta$ and $|\gamma^i(t,\bm x)|\le c_\delta$ for suitable $c_\delta>0$, with $\|\cdot\|$ the Euclidean norm. Since $(u,\bm X_{u})\in V^\delta$ for $u\in[0,t\wedge\tau_{\delta})$, then
\[
\big|\langle\nabla U^n(u,\bm X_{u-}+\lambda\bm\gamma(u,\bm X_{u-})\Delta \bm A_u),\bm\gamma(u,\bm X_{u-})\Delta \bm A_u\rangle\big|\le c_\delta^2 \sum_{i=1}^m|\Delta A^i_u|
\]
and, moreover, 
\[
\sum_{u< t\wedge\tau_{\delta}}\sum_{i=1}^m|\Delta A^i_u|\le \sum_{i=1}^m|A^i|_{t\wedge\tau_{\delta}}<\infty,\quad\text{$\P$-a.s.},
\]
where we recall that $|A^i|_{t\wedge\tau_{\delta}}$ is the total variation of $A^i$ on $[0,t\wedge\tau_{\delta}]$. This justifies the use of dominated convergence theorem in \eqref{eq:dom}.

We now turn our attention to the terms in \eqref{eq:ito-un} that involve the second order spatial derivatives. If we can justify the use of dominated convergence to pass limits under the integral for those terms, then using the second limit in \eqref{eq:ptwconv-1} we obtain \eqref{eq:Ito-0}, upon also letting $\delta\downarrow 0$ at the end.

Since $U$ is twice continuously differentiable in space at all points off the boundary $\partial\cC$ and given that $\P((t,\bm{X}_{t-})\in\partial\cC)=0$ for a.e.\ $t\ge 0$, then it is enough to prove that there exists a constant $C_\delta>0$ independent of $n$, such that
\begin{equation}
\label{eq:goal}
\sup_{(t,\bm x) \in V^{\delta}}\left|\sum_{i,j=1}^m\beta^{ij}(t,\bm x)U^n_{x_i x_j}(t,\bm x)\right|\leq C_{\delta}.
\end{equation}
We accomplish our task in two steps.

{\em Step 1}. We show that for any $(t,\bm x)\in  V^{\delta}$ and $n$ fixed, $U^n_{x_i x_j}(t,\bm x)$ admits the representation: 
\begin{equation}
\begin{aligned}
\label{eq:goal-1}
U^n_{x_i x_j}(t,\bm x)=&n^m\int_{\Lambda_n(\bm x)}U_{x_i x_j}(t,\bm{z})\mathds{1}_{\{z_1\geq b^{\eps}_1(t,z_2,...,z_m)\}}\d\bm{z}\\
&+n^m\int_{\Lambda_n(\bm x)}U_{x_i x_j}(t,\bm{z})\mathds{1}_{\{z_1\le b_1(t,z_2,...,z_m)\}}\d\bm{z}
+F^{n,\eps}_{ij}(t,\bm x), \quad \forall \eps>0,
\end{aligned}
\end{equation}
for any $i,j=1,...,m$, where $F^{n,\eps}_{ij}$ is a remainder that we will show converges to zero and $b^{\eps}_1:\R_+\times\R^{m-1}\to \R$ is defined as
\begin{equation}
\label{eq:b-1-eps}
b^{\eps}_1(t,z_2,\ldots z_m):=b_1(t+\eps,z_2+\eps,z_3+\eps,\ldots z_{m}+\eps)+\eps.
\end{equation}
Recall the compact notation $\bm{z}_{-i}$ from \eqref{notations}. Since we are currently assuming that $b_1$ is non-decreasing in all variables, the limit:
\[
b^{0+}_1(t,\bm{z}_{-1}):=\lim_{\eps\downarrow 0}b^{\eps}_1(t,\bm{z}_{-1}),
\]
exists and $b^{0+}_1(t,\bm{z}_{-1})\ge b_1(t,\bm{z}_{-1})$. Using that $\cD$ is closed then  
\[
\cD\ni\big(t+\eps, b^\eps_1(t,\bm{z}_{-1})-\eps,z_2+\eps,\ldots z_m+\eps\big)\to \big(t,b^{0+}_1(t,\bm{z}_{-1}),z_2,\ldots z_m\big)\in\cD,
\]
as $\eps\downarrow 0$ and, therefore, $b^{0+}_1(t,\bm{z}_{-1})\le b_1(t,\bm{z}_{-1})\le b^{0+}_1(t,\bm{z}_{-1})$ by definition of the set $\cD$. The reason for introducing the function $b^\eps_1$ is that the set
\begin{align}\label{eq:C1e}
\cC^\eps_1:=\{(t,\bm x)\in\R_+\times\R^m : x_1>b^\eps_{1}(t,\bm x_{-1})\}
\end{align}
is such that its closure is strictly contained in $\cC$ for all $\eps>0$, i.e., 
\begin{align}\label{eq:C1eC}
\overline {\cC^\eps_1}\subset \cC.
\end{align}
The latter fact will be used several times, along with the fact that $U_{x_i x_j}\in C(\overline{\cC^\eps_1})$.

Let us start with $i=1$ (or $j=1$) and using the expression in \eqref{eq:un_ij}, let us re-write the integral by considering separately the cases in which the interval $[x_1,x_1+1/n]$ overlaps with the interval $[b_1,b^{\eps}_1]$.  Recalling the notation $\bm{z}_{-i}$ we introduce the disjoint sets
\begin{align*}
&\Theta^\eps_n(x_1):=\{\bm{z}_{-1}:x_1\geq b^{\eps}_1(t,\bm z_{-1})\}\cup\{\bm{z}_{-1}:x_1+\tfrac{1}{n}\leq b_1(t,\bm z_{-1})\},\\
&\Gamma^\eps_n(x_1):=\{\bm{z}_{-1}:x_1+\tfrac{1}{n}\geq b^{\eps}_1(t,\bm z_{-1})\}\cap\{\bm{z}_{-1}:b_1(t,\bm z_{-1})\ge x_1\},\\
\end{align*}
and
\begin{align*}
\Sigma^\eps_n(x_1):=&\big\{\bm{z}_{-1}: x_1+\tfrac{1}{n}\geq b^{\eps}_1(t,\bm z_{-1})>x_1>b_1(t,\bm z_{-1})\big\}\\
&\cup\big\{\bm{z}_{-1}: b^{\eps}_1(t,\bm z_{-1})>x_1+\tfrac{1}{n}>x_1>b_1(t,\bm z_{-1})\big\}\\
&\cup\big\{\bm{z}_{-1}: b^{\eps}_1(t,\bm z_{-1})>x_1+\tfrac{1}{n}>b_1(t,\bm z_{-1})\ge x_1\big\}\\
=:&\,\Sigma^\eps_{n,1}(x_1)\cup\Sigma^\eps_{n,2}(x_1)\cup \Sigma^\eps_{n,3}(x_1).
\end{align*}
Given that $t$ is fixed and we only integrate in the spatial variables in \eqref{eq:un_ij}, we omit $t$ from the notation for the sets $\Theta^\eps_n$, $\Gamma^\eps_n$ and $\Sigma^\eps_n$. It is useful to observe that 
\begin{align}\label{eq:partition}
\Lambda_n^{-1}(\bm{x})=\Theta^\eps_n(x_1)\cup\Gamma^\eps_n(x_1)\cup \Sigma^\eps_n(x_1).
\end{align}
So the integral \eqref{eq:un_ij} can be written as
\begin{equation}
\label{eq:un1}
\begin{aligned}
U^n_{x_1 x_j}(t,\bm x)&=n^m\int_{\Lambda_n^{-1}(\bm x)}\big[ U_{x_j}(t,x_1+\tfrac{1}{n},\bm{z}_{-1})-U_{x_j}(t,x_1,\bm{z}_{-1})\big]\d\bm{z}_{-1}\\
&=n^m\int_{\Theta^\eps_n(x_1)} \Big(\int_{x_1}^{x_1+\tfrac{1}{n}}U_{x_1 x_j}(t,z_1,\bm{z}_{-1})\d z_1\Big)\d \bm{z}_{-1}\\
&\quad+n^m\int_{\Gamma^\eps_n(x_1)}\big[ U_{x_j}(t,x_1+\tfrac{1}{n},\bm{z}_{-1})-U_{x_j}(t,x_1,\bm{z}_{-1})\big]\d\bm{z}_{-1}\\
&\quad+n^m\int_{\Sigma^\eps_n(x_1)}\left[U_{x_j}(t,x_1+\tfrac{1}{n},\bm z_{-1})-U_{x_j}(t,x_1,\bm z_{-1})\right]\d\bm z_{-1},
\end{aligned}
\end{equation}
where we also used that $U_{x_1 x_j}$ is continuous on $[x_1,x_1+\frac{1}{n}]\times \Theta^\eps_n(x_1)$. In the first integral (on the set $\Theta^\eps_n(x_1)$) we have
\begin{equation}\label{eq:th}
\begin{aligned}
&n^m\int_{\Theta^\eps_n(x_1)} \Big(\int_{x_1}^{x_1+\tfrac{1}{n}}U_{x_1 x_j}(t,\bm{z})\d z_1\Big)\d \bm{z}_{-1}\\
&=n^m\int_{\Lambda^{-1}_n(\bm{x})} \mathds{1}_{\{x_1\ge b^\eps_1(t,\bm{z}_{-1})\}}\Big(\int_{x_1}^{x_1+\tfrac{1}{n}}\mathds{1}_{\{z_1\ge b^\eps_1(t,\bm{z}_{-1})\}}U_{x_1 x_j}(t,\bm{z})\d z_1\Big)\d \bm{z}_{-1}\\
&\quad+n^m\int_{\Lambda^{-1}_n(\bm{x})} \mathds{1}_{\{x_1+\frac{1}{n}\le b_1(t,\bm{z}_{-1})\}}\Big(\int_{x_1}^{x_1+\tfrac{1}{n}}\mathds{1}_{\{z_1\le b_1(t,\bm{z}_{-1})\}}U_{x_1 x_j}(t,\bm{z})\d z_1\Big)\d \bm{z}_{-1}.
\end{aligned}
\end{equation}

In the second integral (on the set $\Gamma^\eps_n(x_1)$) we can add and subtract $U_{x_j}(t,b^\eps_1(t,\bm{z}_{-1}),\bm{z}_{-1})$ and $U_{x_j}(t,b_1 (t,\bm{z}_{-1}),\bm{z}_{-1})$ to obtain
\begin{equation}\label{eq:Ga}
\begin{aligned}
&n^m\int_{\Gamma^\eps_n(x_1)}\big[ U_{x_j}(t,x_1+\tfrac{1}{n},\bm{z}_{-1})-U_{x_j}(t,x_1,\bm{z}_{-1})\big]\d\bm{z}_{-1}\\
&=n^m\int_{\Gamma^\eps_n(x_1)}\Big(\int_{b^\eps_1(t,\bm{z}_{-1})}^{x_1+\tfrac{1}{n}}U_{x_1 x_j}(t,\bm{z})\d z_1\Big)\d \bm{z}_{-1}\\
&\quad+\!n^m\!\!\int_{\Gamma^\eps_n(x_1)}\!\!\big[ U_{x_j}(t,b^\eps_1(t,\bm{z}_{-1}),\bm{z}_{-1})\!-\!U_{x_j}(t,b_1 (t,\bm{z}_{-1}),\bm{z}_{-1})\big]\d\bm{z}_{-1}\\
&\quad+n^m\int_{\Gamma^\eps_n(x_1)}\Big(\int^{b_1(t,\bm{z}_{-1})}_{x_1}U_{x_1 x_j}(t,\bm{z})\d z_1\Big)\d \bm{z}_{-1}
\end{aligned}
\end{equation}
by using that $U_{x_1 x_j}$ is continuous in $\overline{\cC^\eps_1}$ and in $\cD$. In the third integral (on the set $\Sigma^\eps_n(x_1)$) we can also proceed in a similar way taking advantage of the decomposition over $\Sigma^\eps_{n,1}(x_1)$, $\Sigma^\eps_{n,2}(x_1)$ and $\Sigma^\eps_{n,3}(x_1)$. In particular, that gives
\begin{equation}\label{eq:Si1}
\begin{aligned}
&n^m\int_{\Sigma^\eps_{n,1}(x_1)}\left[U_{x_j}(t,x_1+\tfrac{1}{n},\bm z_{-1})-U_{x_j}(t,x_1,\bm z_{-1})\right]\d\bm z_{-1}\\
&=n^m\int_{\Sigma^\eps_{n,1}(x_1)}\Big(\int^{x_1+\tfrac{1}{n}}_{b^\eps_1(t,\bm{z}_{-1})}U_{x_1 x_j}(t,\bm{z})\d z_1\Big)\d \bm{z}_{-1}\\ 
&\quad+n^m\int_{\Sigma^\eps_{n,1}(x_1)}\left[U_{x_j}(t,b^\eps_1(t,\bm{z}_{-1}),\bm z_{-1})-U_{x_j}(t,x_1,\bm z_{-1})\right]\d\bm z_{-1}
\end{aligned}
\end{equation}
and
\begin{equation}\label{eq:Si2}
\begin{aligned}
&n^m\int_{\Sigma^\eps_{n,3}(x_1)}\left[U_{x_j}(t,x_1+\tfrac{1}{n},\bm z_{-1})-U_{x_j}(t,x_1,\bm z_{-1})\right]\d\bm z_{-1}\\
&=n^m\int_{\Sigma^\eps_{n,3}(x_1)}\left[U_{x_j}(t,x_1+\tfrac{1}{n},\bm z_{-1})-U_{x_j}(t,b_1(t,\bm{z}_{-1}),\bm z_{-1})\right]\d\bm z_{-1}\\ 
&\quad+n^m\int_{\Sigma^\eps_{n,3}(x_1)}\Big(\int^{b_1(t,\bm{z}_{-1})}_{x_1}U_{x_1 x_j}(t,\bm{z})\d z_1\Big)\d \bm{z}_{-1}.
\end{aligned}
\end{equation}
Let us notice that we can add up the first term on the right-hand side of \eqref{eq:th}, \eqref{eq:Ga} and \eqref{eq:Si1}, which gives
\begin{equation*}
\begin{aligned}
&n^m\!\int_{\Lambda^{-1}_n(\bm x)} \mathds{1}_{\{x_1\ge b^\eps_1(t,\bm{z}_{-1})\}}\Big(\int_{x_1}^{x_1+\tfrac{1}{n}}\mathds{1}_{\{z_1\ge b^\eps_1(t,\bm{z}_{-1})\}}U_{x_1 x_j}(t,\bm{z})\d z_1\Big)\d \bm{z}_{-1}\\
&+n^m\int_{\Gamma^\eps_n(x_1)}\Big(\int_{b^\eps_1(t,\bm{z}_{-1})}^{x_1+\tfrac{1}{n}}U_{x_1 x_j}(t,\bm{z})\d z_1\Big)\d \bm{z}_{-1}+n^m\int_{\Sigma^\eps_{n,1}(x_1)}\Big(\int^{x_1+\tfrac{1}{n}}_{b^\eps_1(t,\bm{z}_{-1})}U_{x_1 x_j}(t,\bm{z})\d z_1\Big)\d \bm{z}_{-1}.
\end{aligned}
\end{equation*}
The above expression is equal to  

\begin{equation}\label{eq:P1}
\begin{aligned}
&n^m\int_{\Lambda^{-1}_n(\bm x)} \mathds{1}_{\{x_1\ge b^\eps_1(t,\bm{z}_{-1})\}}\Big(\int_{x_1}^{x_1+\tfrac{1}{n}}\mathds{1}_{\{z_1\ge b^\eps_1(t,\bm{z}_{-1})\}}U_{x_1 x_j}(t,\bm{z})\d z_1\Big)\d \bm{z}_{-1}\\
&\quad+n^m\int_{\Lambda^{-1}_n(\bm x)}\mathds{1}_{\{x_1+\tfrac{1}{n}\ge b^\eps_1(t,\bm{z}_{-1})> x_1\}}\Big(\int_{x_1}^{x_1+\tfrac{1}{n}}\mathds{1}_{\{z_1\ge b^\eps_1(t,\bm{z}_{-1})\}}U_{x_1 x_j}(t,\bm{z})\d z_1\Big)\d \bm{z}_{-1}\\
&=n^m\!\int_{\Lambda^{-1}_n(\bm x)} \Big(\int_{x_1}^{x_1+\tfrac{1}{n}}\mathds{1}_{\{z_1\ge b^\eps_1(t,\bm{z}_{-1})\}}U_{x_1 x_j}(t,\bm{z})\d z_1\Big)\d \bm{z}_{-1}\\
&=n^m\!\int_{\Lambda_n(\bm x)} \mathds{1}_{\{z_1\ge b^\eps_1(t,\bm{z}_{-1})\}}U_{x_1 x_j}(t,\bm{z})\d \bm{z},
\end{aligned}
\end{equation}
where the first equality uses the fact that on $\{x_1+\tfrac{1}{n}<b^\eps_1(t,\bm{z}_{-1})\}$ the integral with respect to $\d z_1$ vanishes.
Similarly, we can now add up the second term on the right-hand side of \eqref{eq:th} and \eqref{eq:Si2} with the third one on the right-hand side of \eqref{eq:Ga}, to obtain
\begin{equation}\label{eq:P2}
\begin{aligned}
n^m&\int_{\Lambda^{-1}_n(\bm x)} \mathds{1}_{\{x_1+\frac{1}{n}\le b_1(t,\bm{z}_{-1})\}}\Big(\int_{x_1}^{x_1+\tfrac{1}{n}}\mathds{1}_{\{z_1\le b_1(t,\bm{z}_{-1})\}}U_{x_1 x_j}(t,\bm{z})\d z_1\Big)\d \bm{z}_{-1}\\
&+n^m\int_{\Gamma^\eps_n(x_1)}\Big(\int^{b_1(t,\bm{z}_{-1})}_{x_1}U_{x_1 x_j}(t,\bm{z})\d z_1\Big)\d \bm{z}_{-1}\\
&+n^m\int_{\Sigma^\eps_{n,3}(x_1)}\Big(\int^{b_1(t,\bm{z}_{-1})}_{x_1}U_{x_1 x_j}(t,\bm{z})\d z_1\Big)\d \bm{z}_{-1}\\
=&\,n^m\!\int_{\Lambda_n(\bm x)} \mathds{1}_{\{z_1\le b_1(t,\bm{z}_{-1})\}}U_{x_1 x_j}(t,\bm{z})\d \bm{z}.
\end{aligned}
\end{equation}
Finally, we gather the remaining terms from \eqref{eq:Ga}, \eqref{eq:Si1}, \eqref{eq:Si2} and the one remaining integral from \eqref{eq:un1} (i.e., the one over $\Sigma^\eps_{n,2}(x_1)$) and denote
\begin{equation}\label{eq:P3}
\begin{aligned}
F^{n,\eps}_{1j}(t,\bm x):=&n^m\!\!\int_{\Gamma^\eps_n(x_1)}\!\!\big[ U_{x_j}(t,b^\eps_1(t,\bm{z}_{-1}),\bm{z}_{-1})\!-\!U_{x_j}(t,b_1 (t,\bm{z}_{-1}),\bm{z}_{-1})\big]\d\bm{z}_{-1}\\
&+n^m\int_{\Sigma^\eps_{n,1}(x_1)}\left[U_{x_j}(t,b^\eps_1(t,\bm{z}_{-1}),\bm z_{-1})-U_{x_j}(t,x_1,\bm z_{-1})\right]\d\bm z_{-1}\\
&+n^m\int_{\Sigma^\eps_{n,2}(x_1)}\left[U_{x_j}(t,x_1+\tfrac{1}{n},\bm z_{-1})-U_{x_j}(t,x_1,\bm z_{-1})\right]\d\bm z_{-1}
\\
&+n^m\int_{\Sigma^\eps_{n,3}(x_1)}\left[U_{x_j}(t,x_1+\tfrac{1}{n},\bm z_{-1})-U_{x_j}(t,b_1(t,\bm{z}_{-1}),\bm z_{-1})\right]\d\bm z_{-1}.
\end{aligned}
\end{equation}

Combining \eqref{eq:P1}, \eqref{eq:P2} and \eqref{eq:P3} we obtain \eqref{eq:goal-1} for $i=1$. Before proving that indeed $F^{n,\eps}_{1j}$ vanishes as $\eps\downarrow 0$ while keeping $n$ fixed, we prove \eqref{eq:goal-1} for a generic couple $i,j$.

Fix $i\neq 1,j\neq 1$ and recall that we are currently assuming $b_1$ non-decreasing in all its arguments. Then, in particular we can define the generalised (left-continuous) inverse of $b_1$ with respect to $x_i$:
\begin{equation}
\label{eq:invb_1}
b_i(t,\bm{x}_{-i}):=\sup\{x_i\in\R : x_1>b_1(t,x_2,\ldots,x_m)\}.
\end{equation}
It is not hard to check that $x_1>b_1(t,\bm{x}_{-1})\iff x_i<b_i(t,\bm{x}_{-i})$, $x_1\mapsto b_i(t,\bm{x}_{-i})$ is non-decreasing, while $x_j\mapsto b_i(t,\bm{x}_{-i})$ and $t\mapsto b_i(t,\bm{x}_{-i})$ are non-increasing for all $j\neq\{1,i\}$. Thus, we can parametrise $\cC$ and $\cD$ as 
\begin{equation}\label{eq:pCD}
\begin{aligned}
&\cC=\{(t,\bm x)\in \R_+\times\R^m : x_i< b_i(t,\bm{x}_{-i})\},\\
&\cD=\{(t,\bm x)\in \R_+\times\R^m : x_i\ge b_i(t,\bm{x}_{-i})\},
\end{aligned}
\end{equation}
and the analogue of \eqref{eq:b-1-eps} in this case is
\begin{align}\label{eq:bei}
b^\eps_i(t,\bm{x}_{-i}):=b_i(t+\eps,x_1-\eps,x_2+\eps,\ldots, x_{i-1}+\eps,x_{i+1}+\eps, \ldots, x_m+\eps)-\eps.
\end{align}
It is important to notice that, thanks to the monotonicity stated above for $b^\eps_i$, the limit:
\[
b^{0+}_i(t,\bm{x}_{-i}):=\lim_{\eps\downarrow 0}b^{\eps}_i(t,\bm{x}_{-i})
\]
exists and an $b^{0+}_i(t,\bm{x}_{-i})\le b_i(t,\bm{x}_{-i})$. Then, as in the case of $b^\eps_1$ above, since $\cD$ is closed we have 
\[
(t,x_1,\ldots,x_{i-1},b^{0+}_i(t,\bm{x}_{-i}),x_{i+1},\ldots x_m)\in\cD,
\]
Hence 
\begin{align}\label{eq:limbe}
b^{0+}_i(t,\bm{x}_{-i})\le b_i(t,\bm{x}_{-i})\le b^{0+}_i(t,\bm{x}_{-i}).
\end{align}
  Furthermore, letting 
\begin{align}\label{eq:C1ei}
\cC^\eps_i:=\{(t,\bm x)\in\R_+\times\R^m : x_i<b^\eps_{i}(t,\bm x_{-i})\}
\end{align}
we have $\overline {\cC^\eps_i}\subset \cC$, for all $\eps>0$. Thus, repeating the same estimates as above we obtain 
\begin{equation}\label{eq:goalD}
\begin{aligned}
U^n_{x_i x_j}(t,\bm x)=&n^m\int_{\Lambda_n(\bm x)}U_{x_i x_j}(t,\bm{z})\mathds{1}_{\{z_i\leq b^{\eps}_i(t,\bm z_{-i})\}}\d\bm{z}\\
&+n^m\int_{\Lambda_n(\bm x)}U_{x_i x_j}(t,\bm{z})\mathds{1}_{\{z_i\ge b_i(t,\bm z_{-i})\}}\d\bm{z}
+F^{n,\eps}_{ij}(t,\bm x), 
\end{aligned}
\end{equation}
where
\begin{equation}\label{eq:P4-1}
\begin{aligned}
F^{n,\eps}_{ij}(t,\bm x):=&n^m\!\!\int_{\Gamma^\eps_n(x_i)}\!\!\big[ U_{x_j}(t,b_i(t,\bm{z}_{-i}),\bm{z}_{-i})\!-\!U_{x_j}(t,b^\eps_i (t,\bm{z}_{-i}),\bm{z}_{-i})\big]\d\bm{z}_{-i}\\
&+n^m\int_{\Sigma^\eps_{n,1}(x_i)}\left[U_{x_j}(t,x_i+\tfrac{1}{n},\bm z_{-i})-U_{x_j}(t,b^\eps_i(t,\bm{z}_{-i}),\bm z_{-i})\right]\d\bm z_{-i}\\
&+n^m\int_{\Sigma^\eps_{n,2}(x_i)}\left[U_{x_j}(t,x_i+\tfrac{1}{n},\bm z_{-i})-U_{x_j}(t,x_i,\bm z_{-i})\right]\d\bm z_{-i}\\
&+n^m\int_{\Sigma^\eps_{n,3}(x_i)}\left[U_{x_j}(t,b_i(t,\bm{z}_{-i}),\bm z_{-i})-U_{x_j}(t,x_i,\bm z_{-i})\right]\d\bm z_{-i}
\end{aligned}
\end{equation}
and we have substituted the sets $\Gamma^\eps_n$, $\Sigma^\eps_{n,1}$, $\Sigma^\eps_{n,2}$ and $\Sigma^\eps_{n,3}$ from \eqref{eq:P3} with their counterparts in this case:
\begin{align*}
&\Theta^\eps_n(x_i):=\{\bm{z}_{-i}:x_i+\tfrac{1}{n}\leq b^{\eps}_i(t,\bm z_{-i})\}\cup\{\bm{z}_{-i}:x_i\geq b_i(t,\bm z_{-i})\},\\
&\Gamma^\eps_n(x_i):=\{\bm{z}_{-i}:x_i\leq b^{\eps}_i(t,\bm z_{-i})\}\cap\{\bm{z}_{-i}:x_i+\tfrac{1}{n}\ge b_i(t,\bm z_{-i})\},\\
\end{align*}
and
\begin{align*}
\Sigma^\eps_n(x_i):=&\big\{\bm{z}_{-i}: x_i\leq b^{\eps}_i(t,\bm z_{-i})<x_i+\tfrac{1}{n}<b_i(t,\bm z_{-i})\big\}\\
&\cup\big\{\bm{z}_{-i}: b^{\eps}_i(t,\bm z_{-i})<x_i<x_i+\tfrac{1}{n}<b_i(t,\bm z_{-i})\big\}\\
&\cup\big\{\bm{z}_{-i}: b^{\eps}_i(t,\bm z_{-i})<x_i<b_i(t,\bm z_{-i})\le x_i+\tfrac{1}{n}\big\}\\
=:&\,\Sigma^\eps_{n,1}(x_i)\cup\Sigma^\eps_{n,2}(x_i)\cup \Sigma^\eps_{n,3}(x_i).
\end{align*} 

The sets $\{z_i=b^\eps_i(t,\bm z_{-i})\}$ and $\{z_i=b_i(t,\bm z_{-i})\}$ have zero Lebesgue measure in $\R^m$, so that we can take strict inequalities in the indicator functions in the integrals in \eqref{eq:goalD}. Then we can also use the equivalences
\begin{align}\label{eq:inverse}
z_i<b_i(t,\bm z_{-i})\iff z_1> b_1(t,\bm z_{-1})
\end{align}
and
\begin{align*}
&z_i<b^\eps_i(t,\bm z_{-i})\iff z_i+\eps<b_i(t+\eps,z_1-\eps, z_2+\eps,\ldots z_m+\eps)\\
&\iff z_1-\eps> b_1(t+\eps, z_2+\eps,z_3+\eps,\ldots z_m+\eps)\iff z_1>b^\eps_1(t,\bm z_{-1}),
\end{align*}
to rewrite \eqref{eq:goalD} as
\begin{equation*}
\begin{aligned}
U^n_{x_i x_j}(t,\bm x)=&n^m\int_{\Lambda_n(\bm x)}U_{x_i x_j}(t,\bm{z})\mathds{1}_{\{z_1\ge b^{\eps}_1(t,\bm z_{-1})\}}\d\bm{z}\\
&+n^m\int_{\Lambda_n(\bm x)}U_{x_i x_j}(t,\bm{z})\mathds{1}_{\{z_1\le b_1(t,\bm z_{-1})\}}\d\bm{z}
+F^{n,\eps}_{ij}(t,\bm x).
\end{aligned}
\end{equation*}
This proves \eqref{eq:goal-1} for arbitrary $i,j$.

{\em Step 2}. Now that we have derived \eqref{eq:goal-1} we are in a position to find the bound \eqref{eq:goal}. To keep the notation simple, below we write $\Lambda_n=\Lambda_n(\bm{x})$ since $\bm{x}$ is fixed and no confusion shall arise.
Indeed, we have
\begin{equation}
\label{eq:bd-1}
\begin{aligned}
&\sum_{i,j=1}^m\beta^{ij}(t,\bm x)U^n_{x_i x_j}(t,\bm x)\\
&=n^m\int_{\Lambda_n}\sum^{m}_{i,j=1}\beta^{ij}(t,\bm z)U_{x_i x_j}(t,\bm z)\mathds{1}_{\{z_1\ge b^{\eps}_1(t,\bm z_{-1})\}\cup\{z_1\le b_1(t,\bm z_{-1})\}}\d\bm z\\
&\quad+n^m\int_{\Lambda_n}\sum^{m}_{i,j=1}\big(\beta^{ij}(t,\bm x)-\beta^{ij}(t,\bm z)\big)U_{x_i x_j}(t,\bm z)\mathds{1}_{\{z_1\ge b^{\eps}_1(t,\bm z_{-1})\}\cup\{z_1\le b_1(t,\bm z_{-1})\}}\d\bm z\\
&\quad+\sum^{m}_{i,j=1}\beta^{ij}(t,\bm x)F^{n,\eps}_{ij}(t,\bm x).
\end{aligned}
\end{equation}
Thanks to Assumption \ref{ass:1-2}, there exists $c_{1,\delta}>0$, depending only on the compact $V^\delta$ in \eqref{eq:V-delta}, such that 
\begin{align}\label{eq:BDD0}
&\bigg|n^m\int_{\Lambda_n}\sum^{m}_{i,j=1}\beta^{ij}(t,\bm z)U_{x_i x_j}(t,\bm z)\mathds{1}_{\{z_1\ge b^{\eps}_1(t,\bm z_{-1})\}\cup\{z_1\le b_1(t,\bm z_{-1})\}}\d\bm z\bigg|\le n^m\int_{\Lambda_n}\!\!c_{1,\delta}\, \d \bm z=c_{1,\delta}.
\end{align}
Moreover, recalling that $\cD$ is closed, $\beta^{ij}$ is continuous and $U\in C^{1,2}(\cD)$ we also have 
\begin{align}\label{eq:BDD1}
\bigg|n^m\int_{\Lambda_n}\sum^{m}_{i,j=1}\big(\beta^{ij}(t,\bm x)-\beta^{ij}(t,\bm z)\big)U_{x_i x_j}(t,\bm z)\mathds{1}_{\{z_1\le b_1(t,\bm z_{-1})\}}\d\bm z\bigg|\le n^m\int_{\Lambda_n}c_{2,\delta}\, \d \bm z=c_{2,\delta},
\end{align}
for some other constant $c_{2,\delta}>0$ only depending on $V^\delta$.

Next we find a bound for the second integral on the right-hand side of \eqref{eq:bd-1} on the indicator of the set $\{z_1\ge b^\eps_1(t,\bm z_{-1})\}$. We provide the details for $i\neq 1$, $j \neq 1 $, but it will be clear that the same arguments apply for $i=1$ and/or $j=1$. Recalling \eqref{eq:inverse} and the discussion following that expression we have
\begin{equation}
\label{eq:bd-2}
\begin{aligned}
&n^m\int_{\Lambda_n}\big(\beta^{ij}(t,\bm x)-\beta^{ij}(t,\bm z)\big)U_{x_i x_j}(t,\bm z)\mathds{1}_{\{z_1\ge b^{\eps}_1(t,\bm z_{-1})\}}\d\bm z\\
&=n^m\int_{\Lambda_n}\big(\beta^{ij}(t,\bm x)-\beta^{ij}(t,\bm z)\big)U_{x_i x_j}(t,\bm z)\mathds{1}_{\{z_i\le b^{\eps}_i(t, \bm z_{-i})\}}\d\bm z\\
&=n^m\int_{\Lambda_n^{-1}}1_{\{x_i\le b^\eps_i(t,\bm z_{-i})\}}\Big(\int^{b^{\eps}_i(t, \bm z_{-i})\wedge(x_i+\frac{1}{n})}_{x_i}\!\big(\beta^{ij}(t,\bm x)-\beta^{ij}(t,\bm z)\big)U_{x_i x_j}(t,\bm z)\d z_i\Big)\d\bm z_{-i}.
\end{aligned}
\end{equation}

By Assumption \ref{ass:1-2} we know there is a constant $\kappa_{\delta}>0$ such that $\sup_{V^\delta}\sum_{j=1}^m|U_{x_j}|\le \kappa_\delta$. Integrating by parts with respect to $z_i$ and recalling that $\beta^{ij}$ is locally Lipschitz (hence Lipschitz on $V^\delta$ with constant $L_{\beta,\delta}>0$ which can be taken independent of $i,j$) gives 
\begin{align*}
&\left|\int^{b^{\eps}_i(t, \bm z_{-i})\wedge (x_i+\frac{1}{n})}_{x_i}\!\big(\beta^{ij}(t,\bm x)-\beta^{ij}(t,\bm z)\big)U_{x_i x_j}(t,\bm z)\d z_i\right|\\
&=\left|\Big[\big(\beta^{ij}(t,\bm x)-\beta^{ij}(t,\bm z)\big)U_{x_j}(t,\bm z)\Big]^{z_i=b^{\eps}_i(t, \bm z_{-i})\wedge (x_i+\frac{1}{n})}_{z_i=x_i}+\int^{b^{\eps}_i(t, \bm z_{-i})\wedge (x_i+\frac{1}{n})}_{x_i}\!\beta^{ij}_{x_i}(t,\bm z)U_{x_j}(t,\bm z)\d z_i\right|\\
&\le 2\kappa_\delta L_{\beta,\delta}\frac{\sqrt{m}}{n}+\kappa_\delta L_{\beta,\delta}\frac{1}{n}=:c_{3,\delta}\frac{1}{n},
\end{align*}
upon using that the Euclidean norm $\|\bm x-\bm z\|\le \sqrt{m}/n$ for all $\bm z\in\Lambda_n$ and, in particular, $|x_i-b^{\eps}_i(t, \bm z_{-i})\wedge (x_i+\tfrac{1}{n})|\le 1/n$. 

Plugging the above bound back into \eqref{eq:bd-2} we obtain
\begin{equation}\label{eq:bDD}
\begin{aligned}
&n^m\!\!\int_{\Lambda_n}\!\!\big(\beta^{ij}(t,\bm x)-\beta^{ij}(t,\bm z)\big)U_{x_i x_j}(t,\bm z)\mathds{1}_{\{z_1\ge b^{\eps}_1(t,\bm z_{-1})\}}\d\bm z\le c_{3,\delta}\, n^{m-1}\!\!\int_{\Lambda_n^{-1}}\d \bm z_{-1}= c_{3,\delta}.
\end{aligned}
\end{equation}

Thanks to \eqref{eq:bd-1}, \eqref{eq:BDD0}, \eqref{eq:BDD1} and \eqref{eq:bDD} we have
\begin{equation}
\label{eq:bd-7}
\begin{aligned}
&\bigg|\sum_{i,j}\beta^{ij}(t,\bm x)U^n_{x_i x_j}(t,\bm x)\bigg|\le c_{1,\delta}+c_{2,\delta}+m^2 c_{3,\delta} +\bigg|\sum_{i,j}\beta^{ij}(t,\bm x)F^{n,\eps}_{ij}(t,\bm x)\bigg|,
\end{aligned}
\end{equation}
for all $(t,\bm x)\in V^\delta$. Finally, letting $\eps\downarrow 0$ and using that $U\in C^1 (\R_+\times\R^m)$ and the convergence of $b^{\eps}_i$ to $b_i$ for all $i$'s (recall \eqref{eq:limbe}), we obtain 
\[
\lim_{\eps\downarrow 0} F^{n,\eps}_{ij}(t,\bm x)= 0.
\]
Hence
\[
\bigg|\sum_{i,j}\beta^{ij}(t,\bm x)U^n_{x_i x_j}(t,\bm x)\bigg|\le c_{1,\delta}+c_{2,\delta}+m^2 c_{3,\delta},\quad\text{for all $(t,\bm x)\in V^\delta$}.
\]
The latter is equivalent to \eqref{eq:goal} with $C_\delta:=c_{1,\delta}+c_{2,\delta}+m^2 c_{3,\delta}$, since the constants are independent of $(t,\bm x)\in V^\delta$.

This completes the proof of the theorem in the case \eqref{eq:monot} holds.\hfill $\square$

\subsection{Relaxing condition \eqref{eq:monot}}\label{sec:med}
The case in which the boundary has different monotonicity in each variable (as allowed by Assumption \ref{ass:1-3}) can be addressed by the same methods employed above up to some obvious changes. In order to illustrate the main points, fix $2\le \bar k\le m$ and let us assume with no loss of generality that $t\mapsto b_1(t,\bm x_{-1})$ and $x_i\mapsto b_1(t,\bm x_{-1})$ are non-decreasing for $2\le i\le \bar k$, while $x_i\mapsto b_1(t,\bm x_{-1})$ are non-increasing for $\bar k< i\le m$. Then, in the first part of step 1 in the proof above we replace \eqref{eq:b-1-eps} by
\begin{equation*}
b^{\eps}_1(t,x_2,...x_m):=b_1(t+\eps,x_2+\eps,\ldots x_{\bar k}+\eps, x_{\bar k +1}-\eps,\ldots x_{m}-\eps)+\eps,
\end{equation*}
so that $b^{\eps}_1$ is decreasing as $\eps\downarrow 0$ and its limit $b^{0+}_1(t,\bm x_{-1})$ equals $b_1(t,\bm x_{-1})$ by closedness of $\cD$ and the same argument as in step 1. Also in this case \eqref{eq:C1eC} continues to hold and we can repeat verbatim the estimates that lead to \eqref{eq:goal-1} for $i=1$ in step 1 above. For the second part of step 1, we need the generalised inverse $b_i$ for each $i$. In particular, for $2\le i\le \bar k$ the same definition of $b_i$ as in \eqref{eq:invb_1} and the parametrisation of $\cC$ and $\cD$ as in \eqref{eq:pCD} continue to hold. However, $x_j\mapsto b_i(t,\bm{x}_{-i})$ is non-decreasing for $j=1$ and $\bar k < j \le m$, while $x_j\mapsto b_i(t,\bm{x}_{-i})$ and $t\mapsto b_i(t,\bm{x}_{-i})$ are non-increasing for all $2\le j\le \bar k$ with $j\neq i$. Then, setting 
\[
b^\eps_i(t,\bm x_{-i}):=b_i(t+\eps,x_1-\eps, x_2+\eps,\ldots x_{\bar k}+\eps, x_{\bar k+1}-\eps,\ldots x_m-\eps)-\eps
\] 
the functions $b^\eps_i$ increase as $\eps\downarrow 0$ and in the limit $b^{0+}_i(t,\bm x_{-i})$ equals $b_i(t,\bm x_{-i})$. So we can repeat the same arguments as in step 1 and obtain \eqref{eq:goal-1} for $2\le i\le \bar k$ and any $j$. Finally, for $\bar k< i\le m$, since $x_i\mapsto b_1(t,\bm x_{-1})$ is non-increasing we define its (left-continuous)
generalised inverse as 
\[
b_i(t,\bm x_{-i}):=\inf\{x_i\in\R: x_1>b_1(t,\bm x_{-1})\}.
\]
Then we have $x_1> b_1(t,\bm x_{-1})\iff x_i> b_i(t,\bm x_{-i})$, $t\mapsto b_i(t,\bm x_{-i})$ and $x_j\mapsto b_i(t,\bm x_{-i})$ are non-decreasing for $2\le j\le \bar k$, while $x_1\mapsto b_i(t,\bm x_{-i})$ and $x_j\mapsto b_i(t,\bm x_{-i})$ are non-increasing for $\bar k< j\le m$ with $j\neq i$. 
The sets $\cC$ and $\cD$ can be parametrised as 
\begin{equation*}
\begin{aligned}
&\cC=\{(t,\bm x)\in \R_+\times\R^m : x_i> b_i(t,\bm{x}_{-i})\},\\
&\cD=\{(t,\bm x)\in \R_+\times\R^m : x_i\le b_i(t,\bm{x}_{-i})\},
\end{aligned}
\end{equation*}
and we can define the functions 
\[
b^\eps_i(t,\bm x_{-i}):=b_i(t+\eps,x_1-\eps, x_2+\eps,\ldots x_{\bar k}+\eps, x_{\bar k+1}-\eps,\ldots x_m-\eps)+\eps.
\] 
The latter decrease as $\eps\downarrow 0$ and converge to $b_i(t,\bm x_{-i})$ by closedness of $\cD$. Since once again $\overline{\cC^\eps_i}\subset \cC$, we can repeat the arguments from step 1 and arrive at \eqref{eq:goal-1} also for all $j$'s and $i\neq 1$. 

This completes the analogy with step 1. Step 2 can be repeated verbatim. Thus the theorem holds under the generality of Assumption \ref{ass:1-3} concerning the boundary.\hfill $\square$

\appendix
\section{}

In this appendix we provide a comparison result for 1-dimensional stochastic differential equations with random coefficients. The next proposition is essentially a small variation on \cite[Prop.\ 5.2.18]{KS} which holds for deterministic coefficients.
\begin{proposition}\label{prop:comparison}
Fix a filtered probability space $(\Omega,\cF,(\cF_t)_{t\ge 0},\P)$ equipped with a Brownian motion $(B_t)_{t\ge 0}$. Let $\theta:\Omega\times[0,T]\times\R\to\R_+$ and $\eta^i:\Omega\times[0,T]\times\R\to\R$, $i=1,2$ be measurable functions such that
\begin{itemize}
\item[(i)] $(\omega,t)\mapsto\theta(\omega,t,y)$ and $(\omega,t)\mapsto\eta^i(\omega,t,y)$ are progressively measurable for every $y\in\R$;
\item[(ii)] There is a constant $K$ and a function $h:\R_+\to\R_+$ such that 
\begin{align*}
&|\eta^i(\omega,t,y)-\eta^i(\omega,t,y')|\le K|y-y'|,\quad\text{for either $i=1$ or $i=2$},\\
&|\theta(\omega,t,y)-\theta(\omega,t,y')|\le h(|y-y'|),
\end{align*} 
and the function $h$ is such that 
\[
\int_0^\eps h^{-2}(u)\d u=\infty,\quad\text{for every $\eps>0$}.
\]
\end{itemize}
Let $C$ be a c\`adl\`ag process of bounded variation. For $i=1,2$ let $Y^i_0$ be $\cF_0$-measurable and $Y^i$ be the unique $(\cF_t)$-adapted solution of the SDE
\[
Y^i_t=Y^i_0+\int_0^t \eta^i(\omega,s,Y^i_s)\d s+ \int_0^t \theta(\omega,s,Y^i_s)\d B_s+ C_t.
\] 
If $\eta^1(\omega,t,y)\le \eta^2(\omega,t,y)$ for all $(\omega,t,y)\in\Omega\times[0,T]\times\R$ and $Y^1_0\le Y^2_0$, $\P$-a.s., then $Y^1_t \le Y^2_t$ for all $t\ge 0$, $\P$-a.s.
\end{proposition}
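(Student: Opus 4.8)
The plan is to reduce the statement to the classical Yamada--Watanabe comparison argument (as in \cite[Prop.\ 5.2.18]{KS}), the only structural novelty being the presence of the c\`adl\`ag bounded-variation process $C$ and the randomness of the coefficients. The key observation is that $C$ is common to both equations, so it cancels in the difference $\Delta_t:=Y^1_t-Y^2_t$, which therefore solves
\[
\Delta_t=\Delta_0+\int_0^t\big(\eta^1(\omega,s,Y^1_s)-\eta^2(\omega,s,Y^2_s)\big)\d s+\int_0^t\big(\theta(\omega,s,Y^1_s)-\theta(\omega,s,Y^2_s)\big)\d B_s,
\]
and in particular $\Delta$ is a \emph{continuous} semi-martingale with $\Delta_0\le 0$, $\P$-a.s. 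It is essential here that both SDEs are driven by the \emph{same} Brownian motion $B$; note also that the bounded variation of $C$ is not even used, only that it appears identically in both equations.

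Next I would introduce the standard Yamada--Watanabe smoothing of $x\mapsto x^+$. Since $\int_0^\eps h^{-2}(u)\d u=\infty$ for every $\eps>0$, one can pick a strictly decreasing sequence $1=a_0>a_1>a_2>\cdots\downarrow 0$ with $\int_{a_n}^{a_{n-1}}h^{-2}(u)\d u=n$ and then continuous functions $\rho_n$ supported in $(a_n,a_{n-1})$ with $0\le\rho_n(x)\le\tfrac{2}{n\,h^2(x)}$ and $\int_\R\rho_n=1$. Setting $\phi_n(x):=\int_0^{x^+}\!\!\int_0^y\rho_n(z)\,\d z\,\d y$ produces $\phi_n\in C^2(\R)$ with $\phi_n\equiv 0$ on $(-\infty,0]$, $0\le\phi_n'\le 1$, $\phi_n''=\rho_n$, $0\le\phi_n(x)\le x^+$, and $\phi_n(x)\to x^+$ as $n\to\infty$.

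Then I would apply It\^o's formula to $\phi_n(\Delta_t)$ and localise by a stopping time $\zeta_k$ that keeps $\Delta$ bounded and the quadratic variation of the martingale part bounded (such $\zeta_k\uparrow\infty$ since $\Delta$ is continuous and $Y^1,Y^2$ are solutions). Taking expectations and using $\phi_n(\Delta_0)=0$ gives
\[
\E\big[\phi_n(\Delta_{t\wedge\zeta_k})\big]=\E\Big[\int_0^{t\wedge\zeta_k}\!\!\phi_n'(\Delta_s)\big(\eta^1(\omega,s,Y^1_s)-\eta^2(\omega,s,Y^2_s)\big)\d s\Big]+\tfrac12\E\Big[\int_0^{t\wedge\zeta_k}\!\!\rho_n(\Delta_s)\big(\theta(\omega,s,Y^1_s)-\theta(\omega,s,Y^2_s)\big)^2\d s\Big].
\]
For the last term, on $\{\Delta_s>0\}$ the integrand is at most $\tfrac{2}{n\,h^2(\Delta_s)}\,h(|\Delta_s|)^2=\tfrac2n$, so it is bounded by $t/n$. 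For the drift term I would split $\eta^1(\omega,s,Y^1_s)-\eta^2(\omega,s,Y^2_s)$ as $\big(\eta^1(\omega,s,Y^1_s)-\eta^1(\omega,s,Y^2_s)\big)+\big(\eta^1(\omega,s,Y^2_s)-\eta^2(\omega,s,Y^2_s)\big)$ when it is $\eta^1$ that is Lipschitz (and symmetrically when it is $\eta^2$): the second bracket is $\le 0$ by hypothesis while $\phi_n'\ge 0$, and the first has modulus at most $K|\Delta_s|$, so that $\phi_n'(\Delta_s)\big(\eta^1(\omega,s,Y^1_s)-\eta^2(\omega,s,Y^2_s)\big)\le K\,\Delta_s^+$ because $\phi_n'(\Delta_s)|\Delta_s|\le\Delta_s^+$. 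Hence $\E[\phi_n(\Delta_{t\wedge\zeta_k})]\le K\int_0^t\E[\Delta_{s\wedge\zeta_k}^+]\,\d s+t/n$; letting $n\to\infty$ (dominated convergence, $0\le\phi_n(\Delta_{t\wedge\zeta_k})\le\Delta_{t\wedge\zeta_k}^+\le k$) and using Tonelli gives $\E[\Delta_{t\wedge\zeta_k}^+]\le K\int_0^t\E[\Delta_{s\wedge\zeta_k}^+]\,\d s$, whence Gronwall's inequality forces $\E[\Delta_{t\wedge\zeta_k}^+]=0$. Letting $k\to\infty$ gives $\Delta_t\le 0$, $\P$-a.s., for each fixed $t$, and the continuity of $t\mapsto\Delta_t$ upgrades this to $\P(\Delta_t\le 0\ \text{for all}\ t\ge 0)=1$.

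I do not expect a genuine obstacle: every estimate above is pathwise, so the randomness of $\theta,\eta^1,\eta^2$ plays no role, and the cancellation of $C$ reduces the problem to a continuous semi-martingale to which the classical scheme applies with only cosmetic changes. The only point requiring a little care is the localisation and the order of the limits $n\to\infty$, $k\to\infty$ together with the martingale property of the stochastic integral after stopping; this is routine once $\zeta_k$ is chosen to control both $|\Delta|$ and $\int_0^\cdot(\theta(\omega,s,Y^1_s)-\theta(\omega,s,Y^2_s))^2\d s$.
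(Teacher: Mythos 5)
Your proposal is correct and follows essentially the same route as the paper: both reduce to the Yamada--Watanabe comparison argument of \cite[Prop.\ 5.2.18]{KS}, your observation that $C$ cancels in the difference $\Delta=Y^1-Y^2$ being the same device as the paper's shift $\bar Y^i=Y^i-C$, after which the smoothing of $x\mapsto x^+$, the bound via $h$, the use of the Lipschitz $\eta^i$ together with $\eta^1\le\eta^2$, and Gronwall proceed identically. Your explicit localisation and case split on which $\eta^i$ is Lipschitz are just slightly more detailed versions of steps the paper treats implicitly or ``without loss of generality''.
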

\begin{proof}
Setting $\bar Y^i_t:=Y^i_t-C_t$, $\bar \eta^i (\omega,t,y):= \eta^i (\omega,t,y+C_t(\omega))$ and $\bar \theta (\omega,t,y):= \theta (\omega,t,y+C_t(\omega))$ it is clear that $\bar Y^i$ solves
\[
\bar Y^i_t=Y^i_0+\int_0^t \bar \eta^i(\omega,s,\bar Y^i_s)\d s+ \int_0^t \bar \theta(\omega,s,\bar Y^i_s)\d B_s.
\] 
Now we can apply the same line of proof as in \cite[Prop.\ 5.2.18]{KS}, where it is shown that it is possible to construct a sequence of positive functions $(\varphi_n)_{n\in\N}\subset C^2(\R)$ such that  
\begin{equation}\label{eq:phin}
\begin{split}
&\text{$\varphi_n(y)\to(y)^+$ as $n\to\infty$ for all $y\in\R$,}\\
&\text{$|\varphi'_n(y)|\le 1$ with $\varphi'_n(y)=0$ for $y\le 0$ and $\varphi'_n(y)\ge 0$ for $y>0$},\\ 
&\text{$0\le \varphi_n(y)h^2(y)\le 2/n$ for $y>0$.}
\end{split}
\end{equation} 
Setting $Z_t:= Y^1_t-Y^2_t=\bar Y^1_t-\bar Y^2_t$ and applying It\^o's formula we obtain
\begin{align}\label{eq:KS}
\E[\varphi_n(Z_t)]=&\E\Big[\int_0^t \varphi'_n(Z_s)\big(\bar \eta^1(s,\bar Y^1_s)-\bar \eta^2(s,\bar Y^2_s)\big)\d s \Big]\\
&+\frac{1}{2}\E\Big[\int_0^t \varphi''_n(Z_s)\big(\bar \theta(s,\bar Y^1_s)-\bar \theta(s,\bar Y^2_s)\big)^2\d s \Big]\notag\\
\le &\E\Big[\int_0^t \varphi'_n(Z_s)1_{\{Z_s>0\}}\big(\bar \eta^1(s,\bar Y^1_s)-\bar \eta^2(s,\bar Y^2_s)\big)\d s \Big]+\frac{t}{n},\notag
\end{align}
where the inequality is by the third property in \eqref{eq:phin} and the indicator appears because of the second property in \eqref{eq:phin}. With no loss of generality we assume that $\eta^2$ is Lipschitz in the third variable, so that for $\bar \eta^2$ we have
\[
|\bar \eta^2(s,\bar Y^1_s)-\bar \eta^2(s,\bar Y^2_s)|\le K|Z_s|.
\]
Using that fact yields
\begin{align*}
&\varphi'_n(Z_s)1_{\{Z_s>0\}}\big(\bar \eta^1(s,\bar Y^1_s)-\bar \eta^2(s,\bar Y^2_s)\big)\\
&\le \varphi'_n(Z_s)1_{\{Z_s>0\}}\big(\bar \eta^2(s,\bar Y^1_s)-\bar \eta^2(s,\bar Y^2_s)\big)\le K \varphi'_n(Z_s)(Z_s)^+\le K (Z_s)^+.
\end{align*}
Plugging the latter back into \eqref{eq:KS} and letting $n\to\infty$ we obtain
\begin{align*}
\E[(Z_t)^+]\le K\int_0^t\E[(Z_s)^+]\d s.
\end{align*}
Gronwall's inequality gives $\P(Z_t=0)=1$ for all $t\ge 0$ and by continuity of $t\mapsto Z_t$ we conclude that $\P(Y^1_t\le Y^2_t,\,\forall t\ge 0)=1$.
\end{proof}
\medskip

\subsection*{Acknowledgements} We thank Goran Peskir for useful comments on the historical development of local time-space calculus and other generalisations of It\^o's formula. We are also grateful to Jan Palczewski for numerous discussions concerning technical aspects of the paper. Finally, we would like to also thank two anonymous referees whose insightful comments led to improvements in Section 3 of the paper.
Any inaccuracies or technical mistakes remain sole responsibility of the authors.

\bibliographystyle{abbrv} 
\bibliography{ref-ito}

\end{document}